\newtheorem{theorem}{Theorem}[section]
\newtheorem{lemma}[theorem]{Lemma}
\newtheorem{prop}[theorem]{Proposition}
\newtheorem{cor}[theorem]{Corollary}
\theoremstyle{definition}
\newtheorem{definition}[theorem]{Definition}
\theoremstyle{remark}
\newtheorem{remark}[theorem]{Remark}
\numberwithin{equation}{section}
\DeclareMathOperator{\rank}{rank}
\DeclareMathOperator{\Span}{Span}
\DeclareMathOperator{\End}{End}
\DeclareMathOperator{\Aut}{Aut}
\DeclareMathOperator{\Tr}{Tr}
\DeclareMathOperator{\SL}{SL}
\newcommand{\Z}{{\mathbb{Z}}}
\newcommand{\Q}{{\mathbb{Q}}}
\newcommand{\R}{{\mathbb{R}}}
\newcommand{\C}{{\mathbb{C}}}
\newcommand{\eg}{\textit{e.g. }}
\begin{document}

\title{The unreasonable effectiveness of the tensor product.}

\author{Renaud Coulangeon} 
\address{Universit\'e Bordeaux 1, 351 Cours de la Lib\'eration, Talence, France}
\email{renaud.coulangeon@math.u-bordeaux1.fr}
\author{Gabriele Nebe}
\address{
Lehrstuhl D f\"ur Mathematik, RWTH Aachen University,
52056 Aachen, Germany}
\email{nebe@math.rwth-aachen.de}

\dedicatory{This paper is dedicated to Boris Venkov.}

\keywords{extremal even unimodular lattice, Hermitian tensor product}

\maketitle

\begin{abstract}
Using the Hermitian tensor product description of the 
extremal even unimodular lattice of dimension 72  in \cite{dim72} 
we show its extremality with the methods from \cite{coul}.
\\
MSC: primary: 11H06,  secondary: 11H31, 11H50, 11H55, 11H56, 11H71 
\end{abstract}

\section{Introduction}

The paper \cite{dim72} describes the construction of an extremal even unimodular lattice $\Gamma $ 
of dimension 72
of which the existence was a longstanding open problem. 
There are at least three independent proofs of extremality of this lattice, 
two of them are given in \cite{dim72} and rely heavily on computations within the set of minimal
vectors of the Leech lattice. The other one is also
 highly computational and uses the 
methods of \cite{StehleWatkins}. 
All these proofs do not give much structural insight why this lattice is extremal. 
The present paper uses the construction of $\Gamma $ as a Hermitian tensor product 
to derive a more structural proof of extremality of $\Gamma $ with the
methods in \cite{coul}. Moreover, the computational complexity of this new proof is far lower than the previously known ones. 

Let $L $ be a lattice in Euclidean $\ell$-space $(\R^{\ell},x\cdot y)$.
Then the \textit{dual lattice} is $L^{*} := \{ x\in \R^{\ell} \mid x\cdot \lambda \in \Z 
\mbox{ for all } \lambda \in L \}$.
The lattice is called \textit{unimodular} (resp. \textit{modular}), if 
$L$ is equal (resp. similar to) $L^{*}$. Being (uni-)modular implies certain
invariance properties of the theta series of $L$. 
In particular the theta series of an even unimodular lattice is a 
modular form for the full modular group $\SL_2(\Z )$. The theory of 
modular forms allows to show that the \textit{minimum} 
$$\min (L):= \min \{ \lambda \cdot \lambda  \mid 0\neq \lambda  \in \L \}$$
of $L$ is bounded from above by $2+2\lfloor \frac{{\ell}}{24} \rfloor $. 
Lattices achieving equality are called \textit{extremal}.

Several examples of extremal (uni-)modular lattices obtained as
Hermitian tensor products of lower dimensional lattices were already known, 
see for instance \cite{Ba-Ne} for a construction of extremal lattices of dimension $40$ and $80$ related to the Mathieu group $M_{22}$. 
This situation is nevertheless rather exceptional.
 Briefly, in order that a tensor product $L\otimes M$ gives rise to a 
dense sphere packing, it has to contain simultaneously \textit{split}
 and \textit{non split} short vectors.
 Obviously, the minimal length of a split vector $l\otimes m$ 
is exactly $\min L \min M$ while the minimal length of a non 
split vector $\sum_{i=1}^r l_i \otimes m_i$ ($r>1$) will usually be
 strictly smaller. The challenge, when allowing non split minimal vectors,
 is thus precisely to prevent their minimal length from dropping. 

In the first section of this note, we review rather well-known results 
about the minima of tensor products of lattices over $\Z$, mainly due to Kitaoka. 
Also, and maybe less well-known, we comment on the behaviour of tensor product
 with respect to the associated sphere packing density.
 Roughly speaking, we show that the tensor product of two lattices over $\Z$ 
of small dimension cannot achieve a maximal density,
 even locally see Proposition \ref{prop1} and its corollary
 (here ``small'' means ``less than $43$''). 

In contrast, tensor product over small field extensions, \eg imaginary quadratic, 
may produce examples of dense or extremal lattices, 
among which the constructions already mentioned,
 in particular the extremal lattice $\Gamma$ in dimension $72$. 
Section 3 recalls some facts on Hermitian lattices over imaginary
quadratic number fields. These are then applied to give a construction
of one extremal even unimodular 
48-dimensional lattices as a Hermitian tensor product over $\Z[\frac{1+\sqrt{-11}}{2}] $ in Section 4 before we give a new proof of the 
extremality of $\Gamma $ in Section 5. 

\section{Tensor products over $\Z $} \label{s2} 
In this section, we analyze the behaviour of tensor product of Euclidean lattices with respect to \textit{perfection}, a notion which we first recall.

Let $L$ be a Euclidean lattice equipped with a basis $\mathcal B$. 
We denote by $S(L)$ the set of its minimal vectors 
(non zero vectors of shortest length).
 To every minimal vector $x$ we associate the integral column vector $X$ 
of its coordinates on $\mathcal B$, and denote $S_{\mathcal B}$ the set of such
 $X$s as $x$ runs through $S(L)$.
 The \textit{rank of perfection} of $L$ is the integer
\begin{equation*}
r_{perf}(L)= \dim \Span_{\R} \left \{ X X^t \ \mid \  X \in S_{\mathcal B}(L)\right \}.
\end{equation*}
Clearly, $r_{perf}(L)$ does not depend on the choice of a particular basis 
$\mathcal B$, and is at most $\dfrac{\ell(\ell+1)}{2}$, 
where $\ell=\rank L$, since $X X^{t}$ is a symmetric matrix of size $\ell$
 for all $X \in S_{\mathcal B}(L)$.
\begin{definition} A lattice $L$ of rank $\ell$ is perfect if $r_{perf}(L)=\dfrac{\ell(\ell+1)}{2}$.
\end{definition}

Lattices achieving a local maximum of density are classically called 
\textit{extreme}. Perfection is a necessary condition for a lattice to be extreme, as was first observed by Korkine and Zolotareff (see \cite[Chapter 3] {Martinet} for historical comments).


Every element of the tensor product $L\otimes_{\Z} M$ of two Euclidean lattices  can be written as a sum of \textit{split} vectors $x\otimes y$ ($x \in L$, $y \in M$). The Euclidean structure on $L\otimes_{\Z} M$ is defined, on split vectors, by the formula
\begin{equation*}
\left (x\otimes y\right ) \cdot \left (z \otimes t\right ) = \left (x\cdot z\right ) \left (y\cdot t\right )
\end{equation*}
which extends uniquely to a well-defined inner product on $L\otimes_{\Z} M$.

\begin{prop}\label{prop1} 
Let $L$ and $M$ be Euclidean lattices of rank at least $2$. If all the minimal vectors of $L\otimes_{\Z} M$ are split, then $L\otimes M$ is not perfect, and consequently not extreme.
\end{prop}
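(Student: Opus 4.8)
The plan is to compute the perfection space of $L\otimes M$ explicitly under the hypothesis and to show that its dimension falls strictly short of $\frac{\ell m(\ell m+1)}{2}$, where $\ell=\rank L$ and $m=\rank M$. First I would pin down the minimal vectors. Since split vectors $x\otimes y$ of norm $\min(L)\min(M)$ exist and, by hypothesis, \emph{every} minimal vector is split, the minimum of $L\otimes M$ equals $\min(L)\min(M)$; moreover a split vector $x\otimes y$ has norm $(x\cdot x)(y\cdot y)$, so it is minimal exactly when $x\in S(L)$ and $y\in S(M)$. Fixing bases $\mathcal B_L$, $\mathcal B_M$ and using $\mathcal B_L\otimes\mathcal B_M$ as basis of $L\otimes M$, the coordinate column of $x\otimes y$ is the Kronecker product $X\otimes Y$, and hence the associated rank-one symmetric matrix factors as $(X\otimes Y)(X\otimes Y)^t=(XX^t)\otimes(YY^t)$.

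The key step is then to identify the perfection space. Writing $P_L=\Span_{\R}\{XX^t\mid X\in S_{\mathcal B}(L)\}$ and $P_M$ analogously, I claim the matrices $(XX^t)\otimes(YY^t)$ span exactly the tensor product $P_L\otimes P_M$ viewed inside the space of symmetric $\ell m\times\ell m$ matrices. Indeed each such generator lies in $P_L\otimes P_M$, and conversely any product $A\otimes B$ with $A\in P_L$, $B\in P_M$ is a linear combination of these generators, since $A$ and $B$ are themselves combinations of the $XX^t$ and the $YY^t$ respectively. This yields $r_{perf}(L\otimes M)=\dim(P_L\otimes P_M)=r_{perf}(L)\,r_{perf}(M)$.

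Finally I would feed in the universal bounds $r_{perf}(L)\le\frac{\ell(\ell+1)}{2}$ and $r_{perf}(M)\le\frac{m(m+1)}{2}$ and compare their product with the perfection target $\frac{\ell m(\ell m+1)}{2}$. Perfection of $L\otimes M$ would force $\frac{\ell(\ell+1)}{2}\cdot\frac{m(m+1)}{2}\ge\frac{\ell m(\ell m+1)}{2}$, i.e., after clearing the common factor $\ell m$, the inequality $(\ell+1)(m+1)\ge 2(\ell m+1)$, which rearranges to $(\ell-1)(m-1)\le 0$. But $\ell,m\ge 2$ gives $(\ell-1)(m-1)\ge 1>0$, a contradiction. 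Hence $r_{perf}(L\otimes M)<\frac{\ell m(\ell m+1)}{2}$, so $L\otimes M$ is not perfect, and therefore not extreme by the Korkine--Zolotareff criterion recalled above.

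The argument is essentially bookkeeping once the right viewpoint is in place, so the main obstacle is conceptual rather than computational: it is the identification $P_{L\otimes M}=P_L\otimes P_M$, which hinges on the factorization of the rank-one matrices into Kronecker products of rank-one matrices together with the elementary span argument. Everything after that reduces to the one-line inequality $(\ell-1)(m-1)>0$.
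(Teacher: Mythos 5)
Your proposal is correct and follows essentially the same route as the paper: identify the minimal vectors as $x\otimes y$ with $x\in S(L)$, $y\in S(M)$, factor $(X\otimes Y)(X\otimes Y)^t=(XX^t)\otimes(YY^t)$, bound the span by $\frac{\ell(\ell+1)}{2}\cdot\frac{m(m+1)}{2}$, and compare with $\frac{\ell m(\ell m+1)}{2}$. The only differences are cosmetic: you make the final inequality explicit via $(\ell-1)(m-1)>0$ (the paper leaves it implicit), and you assert the exact equality $P_{L\otimes M}=P_L\otimes P_M$ where the paper only needs the upper bound.
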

\begin{proof}
 Fix bases $\mathcal B$ and $\mathcal C$ of $L$ and $M$ respectively. Under the hypothesis that all minimal vectors of $L\otimes_{\Z} M$ are split we have
\begin{align*}
r_{perf}(L\otimes_{\Z} M)&= \dim \Span_{\R} \left \{ (X \otimes Y)(X \otimes Y)^{t} \ \mid \  X \in S_{\mathcal B}(L), \ Y \in S_{\mathcal C}(M) \right \}\\
&= \dim \Span_{\R} \left \{ X X^{t} \otimes YY^{t}\ \mid \  X \in S_{\mathcal B}(L), \ Y \in S_{\mathcal C}(M) \right \}\\
&\leq\dfrac{\ell(\ell+1)}{2}\dfrac{m(m+1)}{2} \\&< \dfrac{\ell m(\ell m+1)}{2}
\end{align*}
whence the conclusion.
\end{proof}

The question as to whether the minimal vectors of a tensor product are split or
 not has been investigated thoroughly by Kitaoka (see \cite[Chapter 7]{kitaoka}).
 Combining some of his results with the previous proposition one obtains :
\begin{cor}\label{cor}  If $\rank L \leq 43$ or $\rank M \leq 43$, then  $L\otimes M$ is not perfect, and consequently not extreme.
\end{cor}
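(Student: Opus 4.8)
The plan is to deduce the statement from Proposition \ref{prop1} by verifying its hypothesis, namely that every minimal vector of $L \otimes M$ is split. By symmetry of the tensor product I may assume $m := \rank M \leq 43$, and, exactly as in Proposition \ref{prop1}, I take both factors to have rank at least $2$ (if one factor has rank $1$ then $L \otimes M$ is similar to the other factor, and the perfection question is not the one addressed here). Granting the splitting, Proposition \ref{prop1} applies without change and gives $r_{perf}(L \otimes M) < \ell m (\ell m + 1)/2$, so $L \otimes M$ is neither perfect nor extreme.

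Everything thus rests on the \emph{splitting} of minimal vectors, which is precisely the content of Kitaoka's analysis in \cite[Chapter 7]{kitaoka}. First I would recall the general mechanism: given a minimal vector $v \in L \otimes M$, write it as $v = \sum_{i=1}^r x_i \otimes y_i$ with $r$ as small as possible. Minimality of $r$ forces the families $(x_i)$ and $(y_i)$ to be linearly independent, so that $r \leq \min(\rank L, m) \leq 43$, and the associated Gram matrices $A = (x_i \cdot x_j)_{i,j}$ and $B = (y_i \cdot y_j)_{i,j}$ are positive definite of size $r$. A direct expansion using the inner product on $L \otimes M$ gives
\begin{equation*}
v \cdot v = \sum_{i,j=1}^{r} (x_i \cdot x_j)(y_i \cdot y_j) = \Tr(AB),
\end{equation*}
and since the split vector $x \otimes y$ with $x, y$ minimal shows $\min(L \otimes M) \leq \min(L)\min(M)$, the task is to establish the reverse inequality together with the fact that equality forces $r = 1$.

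The heart of the matter, and the step I expect to be the main obstacle, is the estimate bounding $\Tr(AB)$ from below by $\min(L)\min(M)$ with equality only for $r = 1$, under the constraint $r \leq 43$. This is where Kitaoka's Hermite-type inequalities enter: one controls the spectrum of $A$ and $B$ using that their diagonal entries are at least $\min(L)$, respectively $\min(M)$, and the threshold $43$ is exactly the largest rank for which these estimates still preclude a non-split minimum. Since this delicate rank-$43$ bound is already available in \cite[Chapter 7]{kitaoka}, the remaining work is the bookkeeping above, and the corollary follows.
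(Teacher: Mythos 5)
Your proposal is correct and follows exactly the paper's route: cite Kitaoka's splitting theorem (\cite[Theorem 7.1.1]{kitaoka}) to verify the hypothesis of Proposition \ref{prop1}, then conclude non-perfection from that proposition. The extra discussion of the $\Tr(AB)$ mechanism is a faithful sketch of what lies inside Kitaoka's result but is not needed once that theorem is invoked as a black box, which is what the paper does.
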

\begin{proof}
 By \cite[Theorem 7.1.1]{kitaoka} we know that if the conditions of the corollary are satisfied, then the minimal vectors are split, whence the conclusion using Proposition \ref{prop1}
\end{proof}

\begin{remark}
\begin{enumerate}
\item To our knowledge, no explicit examples of lattices $L$ and $M$ such that $L\otimes_{\Z}M$ contains non split minimal vectors is known 
(it would require $L$ and $M$ to have rank at least $44$). 
However, it is known thanks to an unpublished theorem of Steinberg (see \cite[Theorem 9.6]{MH}) 
that in any dimension $n \geq 292$ there exist unimodular lattices $L$ and $M$ such
 that $\min L\otimes_{\Z} M < \min L \min M$ (the proof is of course non constructive). 
\item As is well-known, extremal even unimodular lattices of dimension 
$24k$ or $24k+8$ are extreme (cf. \cite{BaVe} also for the modular
analogues), hence perfect.
 Consequently, there is no hope to obtain new extremal modular lattices 
in dimension $24k$ or $24k+8$ $\leq 43^2$ as tensor product
 \textit{over $\Z$}
 of lattices in smaller dimensions.
Note that this also follows from the definition of extremality since 
for $\ell ,m\geq 8$ 
$$
(2+2\lfloor \frac{\ell }{24} \rfloor )
(2+2\lfloor \frac{m}{24} \rfloor ) < 
(2+2\lfloor \frac{\ell m}{24} \rfloor ) .$$
\end{enumerate}
\end{remark}

\section{Preliminaries on Hermitian lattices}\label{s3} 
For sake of completeness, we recall in this section some basic notation and lemmas about Hermitian lattices 
(see \cite{coul} or \cite{hoff} for complete proofs). 
Let $K$ be an imaginary quadratic field,  with ring of integers $\mathcal O_K$. 
The non trivial Galois-automorphism of $K$ is denoted by $\overline{ \phantom{a}}$
 (identified with the classical complex conjugation if an embedding of $K$ in $\C$ is fixed).
 We denote by $\mathcal D_{K\slash\Q}$
 the different of $K\slash\Q$ and $\mathfrak d_K$ its discriminant.
 A Hermitian lattice in a finite-dimensional $K$-vector space $V$, 
endowed with a positive definite Hermitian form $h$, is a finitely generated $\mathcal O_K$-submodule of $V$ 
containing a $K$-basis of $V$. The (Hermitian) dual of a Hermitian lattice $L$ is defined as
\begin{equation*}
L^{\#}= \left\lbrace y \in V \ \mid \ h(y,L) \subset \mathcal O_K\right\rbrace .
\end{equation*}
Its \textit{discriminant} $d_L$ is defined via the choice of a pseudo-basis:
 writing $L= \mathfrak a _1 e_1 \oplus \dots \oplus \mathfrak a _m e_m$,
 where $\left\lbrace e_1, \dots, e_m\right\rbrace$
 is a $K$-basis of $V \simeq K^m$ and the $\mathfrak a _i$s are fractional ideals in $K$,
 we define $d_L$ as the unique positive generator in $\Q$ of the ideal
 $\det\left( h(e_i,e_j)\right) \prod \mathfrak a _i\overline{\mathfrak a _i}$.
 This definition is independent of the choice of a pseudo-basis $\left( \mathfrak a _i, e_i\right)$
 and in the specific case where $\mathcal O_K$ is principal, one may take $\mathfrak a _i=\mathcal O_K$
 for all $i$, and $d_L$ is nothing but the determinant of the Hermitian Gram matrix
 of a basis of $L$.

As in the Euclidean case (see \cite[Proposition 1.2.9]{Martinet}) we obtain the 
following lemma. 

\begin{lemma}\label{proj} Let $L$ be a Hermitian lattice,
$F$ a $K$-subspace of $KL=V$, $p$ the orthogonal projection onto $F^{\perp} $. Then
\begin{equation}
d_L=d_{F\cap L} d_{p(L)}
\end{equation}
\end{lemma}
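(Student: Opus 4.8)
The plan is to mimic the Euclidean proof of \cite[Proposition 1.2.9]{Martinet}: choose a pseudo-basis of $L$ adapted to the sublattice $F\cap L$, then track the two ideal contributions and the determinant of the Hermitian Gram matrix separately. First I would record that $M:=F\cap L$ is indeed a Hermitian lattice in $F$ (for $v\in F\subset KL$ there is a nonzero $\alpha\in\mathcal O_K$ with $\alpha v\in L$, whence $\alpha v\in M$, so $KM=F$) and that it is \emph{saturated} in $L$: if $x\in L$ and $\alpha x\in M$ for some nonzero $\alpha\in\mathcal O_K$, then $\alpha x\in F$ forces $x\in F$, hence $x\in M$. Thus $L/M$ is a finitely generated torsion-free, hence projective, module over the Dedekind domain $\mathcal O_K$, so the exact sequence $0\to M\to L\to L/M\to 0$ splits. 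Consequently a pseudo-basis $M=\mathfrak a_1 e_1\oplus\cdots\oplus\mathfrak a_k e_k$ of $M$ (with $e_1,\dots,e_k$ a $K$-basis of $F$) extends to a pseudo-basis $L=\mathfrak a_1 e_1\oplus\cdots\oplus\mathfrak a_m e_m$ of $L$.

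Next I would push this basis through $p$. Since $p$ is the orthogonal projection with kernel $F=\Span_K(e_1,\dots,e_k)$, the images $p(e_{k+1}),\dots,p(e_m)$ form a $K$-basis of $F^{\perp}$ and $p(L)=\mathfrak a_{k+1}p(e_{k+1})\oplus\cdots\oplus\mathfrak a_m p(e_m)$ as an $\mathcal O_K$-module. Writing $e_j=f_j+p(e_j)$ with $f_j\in F$ and using $h(F,F^{\perp})=0$, the Hermitian Gram matrix of $L$ in the chosen basis acquires the block form $\left(\begin{smallmatrix}A&B\\ B^{*}&C\end{smallmatrix}\right)$, where $A=\bigl(h(e_i,e_j)\bigr)_{i,j\le k}$ is the Gram matrix of $M$; the relation $h(e_i,e_j)=h(f_i,f_j)+h(p(e_i),p(e_j))$ for $i,j>k$ then identifies the Gram matrix of the $p(e_j)$ with the Schur complement $C-B^{*}A^{-1}B$ of $A$.

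Finally I would assemble the three discriminants. The Schur-complement determinant identity gives $\det\bigl(h(e_i,e_j)\bigr)=\det A\cdot\det(C-B^{*}A^{-1}B)$. Splitting the ideal factor $\prod_{i=1}^m\mathfrak a_i\overline{\mathfrak a_i}$ into the parts indexed by $i\le k$ and $i>k$, and recalling that each $\mathfrak a_i\overline{\mathfrak a_i}=N(\mathfrak a_i)\mathcal O_K$ with $N$ multiplicative, the ideal $\det\bigl(h(e_i,e_j)\bigr)\prod_{i=1}^m\mathfrak a_i\overline{\mathfrak a_i}$ defining $d_L$ factors exactly as the product of the ideal $\det A\prod_{i\le k}\mathfrak a_i\overline{\mathfrak a_i}$ defining $d_{F\cap L}$ and the ideal $\det(C-B^{*}A^{-1}B)\prod_{i>k}\mathfrak a_i\overline{\mathfrak a_i}$ defining $d_{p(L)}$. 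Comparing positive rational generators yields $d_L=d_{F\cap L}\,d_{p(L)}$.

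The main obstacle is the pseudo-basis extension: over a non-principal $\mathcal O_K$ one cannot argue with honest bases, so it is the saturation/projectivity argument—rather than a naive basis completion—that makes the ideal bookkeeping go through. Once an adapted pseudo-basis is in hand, the only remaining point demanding care is the identification of the Gram matrix of $p(L)$ with the Schur complement of $A$, after which the determinant and norm multiplicativities close the argument.
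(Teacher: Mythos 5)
Your proof is correct and is exactly the argument the paper intends: the paper gives no proof of Lemma \ref{proj}, merely remarking that it follows ``as in the Euclidean case'' from \cite[Proposition 1.2.9]{Martinet}, and your adaptation (saturation of $F\cap L$, splitting over the Dedekind domain to get an adapted pseudo-basis, Schur complement for the Gram determinant, multiplicativity of the ideal norms) is the standard way to carry that Euclidean proof over to the Hermitian pseudo-basis setting. No discrepancies to report.
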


For any $1\leq r \leq m=\rank_{\mathcal O_K}L$ we define $d_r(L)$ 
as the minimal discriminant of a free $\mathcal O_K$-sublattice of rank $r$ of $L$.
 In particular, one has  
$$d_1(L)=\min(L):= \min \{ h(v,v) \mid 0\neq v\in L \}. $$
The minimal discriminants of $L$ and $L^{\#}$ satisfy the following symmetry relation, 
the proof of which is the same as in the Euclidean case (see \cite[Proposition 2.8.4]{Martinet}).

\begin{lemma}\label{ddd}
Let $L$ be a Hermitian lattice of rank $m$. Then, for any $1 \leq r  \leq m-1$, one has
\begin{equation}
d_L=d_{r}(L)d_{m-r}(L^{\#})^{-1}.
\end{equation}
\end{lemma}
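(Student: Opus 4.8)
The plan is to reduce the identity to the behaviour of the discriminant under orthogonal projection, for which Lemma \ref{proj} is tailor-made, combined with the elementary self-duality $d_{N^{\#}} = d_N^{-1}$, valid for any Hermitian lattice $N$ (immediate from the fact that a Gram matrix of $N^{\#}$ is the inverse of a Gram matrix of $N$, up to the usual adjustment of the fractional ideals in a pseudo-basis). First I would observe that the minimum defining $d_r(L)$ is attained on a \emph{primitive} (saturated) sublattice: if $M\subseteq L$ is free of rank $r$ and $F=KM$, then $M\subseteq F\cap L$ with both of rank $r$, so $d_{F\cap L}\le d_M$. Hence $d_r(L)=\min_{\dim_K F=r} d_{F\cap L}$, the minimum now running over the $r$-dimensional $K$-subspaces $F$ of $V$; the same reduction applied to $L^{\#}$ gives $d_{m-r}(L^{\#})=\min_{\dim_K G=m-r} d_{L^{\#}\cap G}$.

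The key geometric input is a projection–section duality. Let $F$ be an $r$-dimensional subspace and let $p$ be the orthogonal projection onto $F^{\perp}$. For $y\in F^{\perp}$ and $x\in L$ one has $x-p(x)\in F$, hence $h(y,p(x))=h(y,x)$; consequently $y\in (p(L))^{\#}$ if and only if $h(y,x)\in\mathcal O_K$ for all $x\in L$, that is, $(p(L))^{\#} = L^{\#}\cap F^{\perp}$. Combining this with $d_{N^{\#}}=d_N^{-1}$ yields $d_{p(L)}=d_{L^{\#}\cap F^{\perp}}^{-1}$.

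Now I would feed this into Lemma \ref{proj}: since $F\cap L$ and $p(L)$ are exactly the section and the projection attached to $F$, that lemma gives $d_L = d_{F\cap L}\,d_{p(L)}$, whence $d_{F\cap L}=d_L\,d_{L^{\#}\cap F^{\perp}}$. Taking the minimum over all $r$-dimensional $F$ and using that $F\mapsto F^{\perp}$ is a bijection onto the $(m-r)$-dimensional subspaces, the right-hand side is minimized exactly when $d_{L^{\#}\cap F^{\perp}}$ is, so $d_r(L)=d_L\,d_{m-r}(L^{\#})$, which is the asserted relation $d_L=d_r(L)d_{m-r}(L^{\#})^{-1}$.

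The routine parts are the bilinear-algebra identity $h(y,p(x))=h(y,x)$ and the bookkeeping with the minima. The one place that demands genuine care is the reduction to primitive sublattices when $\mathcal O_K$ is not a principal ideal domain: there $F\cap L$ need not be \emph{free}, so one must verify that the minimal discriminant over free rank-$r$ sublattices still coincides with $\min_F d_{F\cap L}$ (equivalently, that an optimal saturated sublattice may be taken free, or that no non-free saturated sublattice beats the free minimum). In the principal case — which is all that is needed for the applications in Sections 4 and 5 — this difficulty disappears and the argument is verbatim the Euclidean one of \cite[Proposition 2.8.4]{Martinet}.
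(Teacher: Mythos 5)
Your argument is correct and is essentially the proof the paper invokes: the paper gives no details beyond citing \cite[Proposition 2.8.4]{Martinet}, and your chain --- reduction to saturated sublattices, the duality $(p(L))^{\#}=L^{\#}\cap F^{\perp}$ combined with $d_{N^{\#}}=d_N^{-1}$, and Lemma \ref{proj} --- is exactly that Euclidean argument transported to the Hermitian setting. Your explicit caveat about freeness of $F\cap L$ when $\mathcal O_K$ is not principal is a point the paper glosses over entirely, and you correctly note it is vacuous for the class-number-one fields used in the applications.
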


By restriction of scalars, an $\mathcal O_K$-lattice of rank $m$ can be viewed as a $\Z$-lattice of rank $2m$, the {\em trace lattice of $L$},
 with inner product defined by
\begin{equation}\label{inner} 
x \cdot y = \Tr_{K\slash\Q} h(x,y).
\end{equation}
The dual $L^{*}$ of $L$ with respect to that inner product is linked to $L^{\#}$ by
\begin{equation}
L^{*}=\mathcal D_{K\slash\Q}^{-1}L^{\#}
\end{equation}
whence the relation
\begin{equation}
\det L = \lvert\mathfrak d_K\rvert ^m (d_L)^2.
\end{equation}

Note that, because of (\ref{inner}), the minimum of $L$, viewed as an ordinary $\Z$-lattice,
 is twice its "Hermitian" minimum $d_1(L)$. To avoid any confusion, we stick to Hermitian minima in what follows. 

For the proof of the main result, we use the technique developed in \cite{coul} to bound the minimum of a
 Hermitian tensor product. Suppose $L$ and $M$ are Hermitian lattices over a number field $K$.
 Then any vector $z\in L\otimes_{\mathcal O_K} M$ is a sum of tensors of the form 
$v\otimes w$ with $v\in L$ and $w\in M$. The minimal number of
summands in such an expression is called the {\em rank } of $z$. 
Clearly the rank of any vector is less than the minimum of the dimension of the
two tensor factors. 

As in the Euclidean case, the Hermitian structure on $L\otimes_{\mathcal O_K}  M$ is defined, on split vectors, by the formula
\begin{equation*}
h\left (x\otimes y, z \otimes t\right ) = h\left (x,z\right ) h\left (y, t\right )
\end{equation*}
which extends uniquely to a well-defined positive definite Hermitian form on $L\otimes_{\mathcal O_K}  M$.

\begin{prop} (\cite[Proposition 3.2]{coul})\label{bound} 
Let $L$ and $M$ be Hermitian lattices and denote by $d_r(L)$ the minimal 
determinant of a rank $r$ sublattice of $L$. 
Then for any vector $z\in L\otimes _{\mathcal O_K} M $ of rank $r$ 
one has
\begin{equation}\label{tens} 
h(z,z) \geq r d_r(L)^{1/r} d_r(M)^{1/r} .
\end{equation}
Moreover, a vector $z$ of rank $r$ in $ L\otimes _{\mathcal O_K} M $ for which equality holds in (\ref{tens}) exists if and only if $M$ and $L$ contain minimal $r$-sections $M_r$ and $L_r$ such that $M_r \simeq L_r^{\#}$.
\end{prop}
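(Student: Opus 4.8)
The plan is to turn the norm $h(z,z)$ into the trace of a product of two positive definite Hermitian Gram matrices, and then to extract the bound from the arithmetic–geometric mean inequality applied to the (necessarily positive real) spectrum of that product.

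First I would write $z=\sum_{i=1}^{r}v_i\otimes w_i$ with $r$ minimal. Minimality forces $v_1,\dots,v_r$ to be $K$-linearly independent in $L$ and $w_1,\dots,w_r$ to be $K$-linearly independent in $M$, so that $L_r=\langle v_1,\dots,v_r\rangle$ and $M_r=\langle w_1,\dots,w_r\rangle$ are free sublattices of rank $r$. Setting $A=(h(v_i,v_j))_{i,j}$ and $B=(h(w_j,w_i))_{i,j}$, both positive definite Hermitian $r\times r$ matrices with $\det A=d_{L_r}$ and $\det B=d_{M_r}$, the defining formula for $h$ on split vectors together with bilinearity gives
$$h(z,z)=\sum_{i,j}h(v_i,v_j)\,h(w_i,w_j)=\Tr(AB).$$

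Next I would observe that $AB$ is similar to $A^{1/2}BA^{1/2}$, hence is diagonalizable with positive real eigenvalues $\lambda_1,\dots,\lambda_r$, and apply AM-GM:
$$h(z,z)=\Tr(AB)=\sum_i\lambda_i\ \geq\ r\Big(\prod_i\lambda_i\Big)^{1/r}=r\det(AB)^{1/r}=r(\det A)^{1/r}(\det B)^{1/r}.$$
Since $L_r$ and $M_r$ have rank $r$, we have $\det A\geq d_r(L)$ and $\det B\geq d_r(M)$, which yields the asserted inequality \eqref{tens}. For the equality statement, AM-GM forces all $\lambda_i$ equal, i.e. $A^{1/2}BA^{1/2}=\lambda I$, equivalently $B=\lambda A^{-1}$, while tightness of the determinant bounds forces $\det A=d_r(L)$ and $\det B=d_r(M)$, so $L_r,M_r$ are minimal $r$-sections; taking determinants in $B=\lambda A^{-1}$ then pins down $\lambda=(d_r(L)d_r(M))^{1/r}$. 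The point to unwind carefully is that $A^{-1}$ is exactly the Gram matrix of the dual basis spanning $L_r^{\#}$, so $B=\lambda A^{-1}$ says precisely that $M_r$ is isometric to a rescaling of $L_r^{\#}$, i.e. $M_r\simeq L_r^{\#}$. Conversely, given minimal $r$-sections with $M_r\simeq L_r^{\#}$, I would fix a basis of $L_r$ with Gram matrix $A$, take in $M_r$ the basis whose Gram matrix is $\lambda A^{-1}$, and verify that $z=\sum_i v_i\otimes w_i$ lies in $L\otimes_{\mathcal O_K}M$, has rank exactly $r$ (both families being independent), and satisfies $h(z,z)=\lambda\,\Tr(AA^{-1})=\lambda r$, attaining equality.

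The main obstacle I expect is the equality analysis rather than the inequality. One must justify that $AB$ has a positive real spectrum in order to legitimately run AM-GM on its eigenvalues, and then translate the matrix identity $B=\lambda A^{-1}$ into the intrinsic statement $M_r\simeq L_r^{\#}$, carefully tracking the similarity factor $\lambda$ and the fact that inverting the Gram matrix corresponds to passing to the dual lattice. The converse direction additionally requires confirming that the constructed $z$ does not collapse to rank smaller than $r$.
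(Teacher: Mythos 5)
Your proof is correct and follows exactly the argument that the paper outsources to \cite[Proposition 3.2]{coul}: write $h(z,z)=\Tr(AB)$ for the Gram matrices of a minimal-length split decomposition (whose factors are linearly independent by minimality) and apply AM--GM to the positive spectrum of $AB$, with the equality case read off from $B=\lambda A^{-1}$ together with minimality of the sections --- precisely the condition $\left(h(e_i,e_j)\right)=\overline{\left(h(f_i,f_j)\right)}^{-1}$ that the paper extracts by ``close inspection'' of the cited proof. Your explicit tracking of the scaling factor $\lambda=(d_r(L)d_r(M))^{1/r}$ is, if anything, slightly more careful than the paper's unadorned ``$M_r\simeq L_r^{\#}$''.
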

\begin{proof}
 The inequality (\ref{tens}) is precisely \cite[Proposition 3.2]{coul}.  The last assertion follows from close inspection of the proof, which shows that $h(z,z) =r d_r(L)^{1/r} d_r(M)^{1/r}$ if and only if $z=\sum_{i=1}^r e_i \otimes f_i$ where $\{e_1, \dots, e_r\}$, resp. $\{f_1, \dots, f_r\}$, are $\mathcal O_K$-bases of minimal sections $M_r$ and $L_r$ of $M$ and $L$ respectively, such that $\left( h(e_i,e_j)\right) _{i,j}=\overline{\left( h(f_i,f_j)\right)} _{i,j}^{-1}$.
\end{proof}

\subsection{Two dimensional Hermitian lattices.} \label{eukl}

The results in this section are certainly well known, we include them together with the 
short proof for completeness.

\begin{definition}
The {\em Euclidean minimum} of $\mathcal O_K$ is defined as 
$$\mu (\mathcal O_K) := \sup _{x\in K} \inf _{a\in \mathcal O _K} N_{K/\Q } (x-a ) .$$
An element $z\in K$ such that $N(z) = \inf _{a\in \mathcal O _K} N_{K/\Q } (z-a ) = \mu (\mathcal O_K) $
is called a {\em deep hole} of $O_K$.
\end{definition}

Note that the Euclidean minimum is just the covering radius of the 
lattice $\mathcal O_K$ with respect to the positive definite bilinear form 
$x\cdot y:= \frac{1}{2} \Tr _{K/\Q } (x \overline{y } )$.
Also, ${\mathcal O}_K $ is a Euclidean ring if $\mu (\mathcal O_K) < 1$.

\begin{prop}\label{dim2}
Assume that $\mu := \mu(\mathcal O_K) < 1$ and let $L$ be a $2$-dimensional Hermitian 
$\mathcal O_K$-lattice with $\min (L) = m$.
Then $d_L \geq m^2 (1-\mu) $. 
\end{prop}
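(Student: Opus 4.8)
The plan is to run the classical two-dimensional reduction argument in the Hermitian setting, using the hypothesis $\mu<1$ twice: once to guarantee that $L$ is free, and once — crucially — to control the off-diagonal entry of a Gram matrix.

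First I would note that $\mu(\mathcal O_K)<1$ forces $\mathcal O_K$ to be Euclidean, hence a principal ideal domain, so that every $\mathcal O_K$-lattice, and in particular $L$, is free. I would therefore write $L=\mathcal O_K e_1\oplus \mathcal O_K e_2$ and choose $e_1$ to be a minimal vector, so $h(e_1,e_1)=m$. Setting $b=h(e_2,e_1)$ and $c=h(e_2,e_2)$, the discriminant is the determinant of the Hermitian Gram matrix,
\begin{equation*}
d_L = mc - b\overline{b} = mc - \abs{b}^2 = m\left(c-\frac{\abs{b}^2}{m}\right),
\end{equation*}
where $\abs{b}^2 = N_{K/\Q}(b)$ since $K$ is imaginary quadratic. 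Geometrically $c-\abs{b}^2/m$ is the squared Hermitian norm of the component of $e_2$ orthogonal to $e_1$, and the whole point is to bound it from below.

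The key step is to exploit the freedom to replace $e_2$ by $e_2-ae_1$ for $a\in\mathcal O_K$: this is a unimodular change of pseudo-basis, so it leaves $d_L$ unchanged, while it replaces $b$ by $b-am$, i.e. it translates $b/m\in K$ by $a$. By the very definition of the Euclidean minimum I can choose $a\in\mathcal O_K$ with
\begin{equation*}
N_{K/\Q}\!\left(\frac{b}{m}-a\right)\le \mu ,
\end{equation*}
and after this substitution the new off-diagonal entry $b'=b-am$ satisfies $\abs{b'}^2 = m^2 N_{K/\Q}(b/m-a)\le \mu m^2$. On the other hand, the new vector $e_2'=e_2-ae_1$ is a nonzero element of $L$, so its norm $c'=h(e_2',e_2')$ is at least the minimum $m$. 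Combining the two estimates gives
\begin{equation*}
d_L = mc' - \abs{b'}^2 \ge m\cdot m - \mu m^2 = m^2(1-\mu),
\end{equation*}
which is the claim.

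I expect the only genuinely delicate point to be the identification of the right quantity to reduce: one must translate $b/m$ (not $b$, and not $c$) by an element of $\mathcal O_K$, and check that the resulting substitution is simultaneously a determinant-preserving base change and the one produced by the deep-hole estimate. The two remaining inputs — that $c'\ge m$ because $e_2'$ is a nonzero lattice vector, and that $N_{K/\Q}$ coincides with the squared modulus so that the Euclidean-minimum bound feeds directly into the determinant — are routine. No perfection or density theory from Section~\ref{s2} is needed here; the argument is purely a sharpened Hermite-style inequality for binary Hermitian forms.
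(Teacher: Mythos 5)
Your proposal is correct and is essentially the paper's own argument: take a minimal vector as the first basis vector, use the Euclidean minimum to reduce the second basis vector so the projection coefficient has norm at most $\mu$, and bound the determinant from below. The only cosmetic difference is that the paper routes the final estimate through the intermediate inequality $d_L \geq (1-\mu)h(x,x)h(y,y)$ (which it reuses in Remark~\ref{basis}), whereas you bound $h(e_2',e_2')\geq m$ directly.
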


\begin{proof}
The proof follows the argument of \cite[Lemma 4.2.2]{coul}. 
Let $x\in L$ be a minimal vector of $L$ and extend it to  an $\mathcal O_K$-basis of $L={\mathcal O}_K x + \mathcal O_K y $. 
Let $p(y) = bx $ denote the projection of $y$ onto $\langle x \rangle $.
Replacing $y$ by $y-ax $ with $a\in \mathcal O_K$ such that $N_{K/\Q }(a-b) \leq \mu $
we may assume that $N_{K/\Q }(b) = b \overline{b} \leq \mu $. Then
$$\begin{array}{lll}
d_L & = h(x,x) h(y-p(y),y-p(y)) & \geq h(x,x) (h(y,y) - \mu h(x,x) ) \\
& \geq (1-\mu ) h(x,x) h(y,y)  &
\geq (1-\mu ) m^2. \end{array} $$
\end{proof}

\begin{remark}\label{basis}
The proof shows that for $\mu < 1$ any 2-dimensional lattice $L$ has an $\mathcal O_K$-basis
$(x,y)$ such that 
$$h(x,x)h(y,y)(1-\mu) \leq d_L .$$
\end{remark}

The norm Euclidean imaginary quadratic number fields $\Q[\sqrt{-d}]$.
The last two lines give  the orbit representatives of the deep holes
under the action of $(\mathcal O_K^*) : \langle \overline{\phantom{a}} \rangle $

\begin{center}
\begin{tabular}{|c|c|c|c|c|c|}
\hline
d & 3 & 1 & 7 & 2 & 11 \\
\hline
$\mu $ & 1/3 & 1/2 & 4/7 & 3/4& 9/11 \\
\hline
$(1-\mu ) d_K $  & 2 & 2 & 3 & 2 & 2 \\
\hline
$\# $ deep holes & 6 & 4 & 6 & 4 & 6 \\
\hline
orbit repr. & $\frac{1}{\sqrt{-3}} $ & $\frac{1}{1+i}$ & $2/\sqrt{-7}$ & $\frac{1+\sqrt{-2}}{2}$& $3/\sqrt{-11}$ \\
of deep holes &  &  & $\frac{7+3\sqrt{-7}}{14} $ & & $\frac{11+5\sqrt{-11}}{22}$ \\
\hline
\end{tabular}
\end{center}

\begin{cor}
Let  $z\in K $ be a deep hole of $\mathcal O_K$. Then 
the lattice $L_K$ with Gram matrix 
$ \left( \begin{array}{cc} 1 & z \\ \overline{z}  & 1 \end{array} \right) $ 
the unique (up to $\mathcal O_K$-linear or antilinear isometry) 
densest $2$-dimensional Hermitian $\mathcal O_K$-lattice.
The $4$-dimensional $\Z $-lattice $(L_K,\Tr_{K/\Q} (h) )$ is isometric to
the root lattice $D_4$ for $d=3,1,2,11$ and to $A_2\perp A_2$ for $d=7$.
\end{cor}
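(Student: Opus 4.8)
The plan is to combine the lower bound of Proposition \ref{dim2} with a direct computation for $L_K$, and then to read off the trace lattice from an explicit Gram matrix. The density of a rank-$2$ Hermitian lattice is measured by the invariant $\min(L)^2/d_L$, and Proposition \ref{dim2} gives $\min(L)^2/d_L \leq 1/(1-\mu)$ for every such $L$. Since the Gram matrix of $L_K$ has determinant $d_{L_K} = 1 - N(z) = 1-\mu$ (because $z$ is a deep hole), the lattice $L_K$ attains this universal bound as soon as $\min(L_K)=1$; so the first task is to establish $\min(L_K)=1$.

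For the minimum I would first rewrite, for $v = a e_1 + b e_2$,
\begin{equation*}
h(v,v) = N(a) + N(b) + \Tr_{K/\Q}(\bar a b z) = N(a+bz) + N(b)(1-\mu),
\end{equation*}
the second equality by completing the square and using $N(z)=\mu$. The basis vectors give $\min(L_K)\le 1$. For the reverse inequality, note that if $b=0$ then $h(v,v)=N(a)\ge 1$, while if $N(b)(1-\mu)\ge 1$ there is nothing to prove; hence only the finitely many $b\in\mathcal O_K$ with $1\le N(b)<1/(1-\mu)$ remain, and for each it suffices to check that $\min_{a\in\mathcal O_K}N(a+bz)$, i.e. the squared distance from $-bz$ to $\mathcal O_K$, is at least $1-N(b)(1-\mu)$. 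This is a finite closest-vector computation in each of the five fields, resting only on the tabulated value of $\mu$.

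Once $\min(L_K)=1$ is known, $d_{L_K}=1-\mu=\min(L_K)^2(1-\mu)$, so equality holds in Proposition \ref{dim2} and $L_K$ is densest. For uniqueness I would analyse this equality case. By Remark \ref{basis} any optimal $L$ (normalised to $\min(L)=1$) has a basis $(x,y)$ with $h(x,x)h(y,y)(1-\mu)\le d_L=1-\mu$, so $h(x,x)h(y,y)\le 1$; minimality forces $h(x,x)=h(y,y)=1$, and then the projection identity $d_L=h(x,x)\big(h(y,y)-N(b)h(x,x)\big)$ forces $N(b)=\mu$ for the projection coefficient $b=h(y,x)$. Thus $b$ is a deep hole and the Gram matrix equals $\left(\begin{smallmatrix}1 & \bar b\\ b & 1\end{smallmatrix}\right)$, so every densest lattice has the stated shape. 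It remains to see that any two deep holes yield $\mathcal O_K$-linearly or antilinearly isometric lattices: unit multiplication and conjugation account for the orbits listed in the last two lines of the table, and for $d=7,11$, where two orbits occur, one exhibits an explicit $U\in\GL_2(\mathcal O_K)$ transforming one Gram matrix into the other, merging the two classes.

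Finally, to identify the trace lattice I would write out the $4\times 4$ Gram matrix of $(L_K,\Tr_{K/\Q}(h))$ with respect to a $\Z$-basis of $L_K$ for each $d$. In every case this matrix is even with determinant $\lvert\mathfrak d_K\rvert^2 d_{L_K}^2$, equal to $4$ for $d=3,1,2,11$ and to $9$ for $d=7$, and with minimum $2\min(L_K)=2$. Exhibiting four minimal (norm-$2$) vectors forming a $\Z$-basis shows the lattice is a simply-laced root lattice of rank $4$; since among these the determinant equals $4$ only for $D_4$ and $9$ only for $A_2\perp A_2$, the classification of rank-$4$ root systems identifies it as claimed. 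I expect the main obstacle to be the uniqueness step: the finite minimum check and the trace-lattice identification are routine once the deep holes are fixed, whereas showing that the two deep-hole orbits for $d=7,11$ give isometric Hermitian lattices genuinely requires producing the merging isometry (equivalently, that $A_2\perp A_2$, resp. $D_4$, carries a unique $\mathcal O_K$-structure up to conjugacy).
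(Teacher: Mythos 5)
Your proposal is correct and follows essentially the route the paper intends: the corollary is stated there without an explicit proof, being left to follow from Proposition \ref{dim2}, Remark \ref{basis} and the table of Euclidean minima and deep holes, which is precisely the combination you carry out (lower bound plus equality analysis for uniqueness, finite closest-vector checks for $\min(L_K)=1$, determinant/minimum data to pin down the trace lattice, and an explicit isometry to merge the two deep-hole orbits for $d=7,11$, a point the paper itself only remarks on). The one step to tighten is the inference from $N(b)=\mu$ to ``$b$ is a deep hole'' in the uniqueness argument: a priori $b$ could have norm $\mu$ while lying closer to some $a\neq 0$, but then $h(y-ax,y-ax)=(1-\mu)+N(b-a)<1$ would contradict $\min(L)=1$, so with that one line added the argument is complete.
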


This might give some hint of why tensor products of Hermitian lattices over 
$\Z[\frac{1+\sqrt{-7}}{2} ]$ seem to be more successful than over other 
rings of integers in imaginary quadratic fields. 

Also note that for $d=7$ and $d=11$, where there are 2 orbits of deep holes,
the corresponding lattices $L_K$ are isometric.

\section{Hermitian $\Z[\frac{1+\sqrt{-11}}{2} ]$-lattices.} 

We now apply the theory from above to the special case 
$K=\Q[\sqrt{-11}]$. Let $\eta := \frac{1+\sqrt{-11}}{2} $.
Then $\eta ^2-\eta +3 = 0$ and $\mathcal O_K = \Z[\eta ]$ 
 is an Euclidean domain with Euclidean minimum $\frac{9}{11}$. 

The Hermitian $\mathcal O_K$-structures of the Leech lattice have not 
been classified. However we may construct some of them using 
the classification of finite quaternionic matrix groups in \cite{quat} 
and embeddings of $K$ into definite quaternion algebras. 
It turns out that we obtain three different $\mathcal O_K$-structures, $P_1$, $P_2$ and $P_3$,
with automorphism groups $\Aut_{\mathcal O_K}(P_1) \cong 2.G_2(4)$ (with endomorphism algebra
${\mathcal Q}_{\infty,2}$),
 $\Aut_{\mathcal O_K}(P_2) \cong (L_2(7)\times \tilde{S}_3).2$ (with endomorphism algebra
${\mathcal Q}_{\infty,7}$),
 and
$\Aut_{\mathcal O_K}(P_3) \cong \SL_2(13).2$ (with endomorphism algebra 
${\mathcal Q}_{\infty,13}$). 

\begin{prop}
Let $T$ be the $2$-dimensional unimodular Hermitian $\mathcal O_K$-lattice with 
Gram matrix $\left( \begin{array}{cc} 2 & \eta \\ \overline{\eta } & 2 \end{array} \right) $. 
Let $(P,h) $ be some $12$-dimensional $\mathcal O_K $ lattice such that the trace lattice
$(P,\Tr _{K/\Q} \circ h )$ is isometric to the Leech lattice. 
Then the  Hermitian tensor product $R:=P\otimes _{\mathcal O_K} T$ has 
minimum either $2$ or $3$. 
The minimum of $R$ is $3$, if and only if $(P,h)$ does not represent 
one of the lattices $L_K$  or $T$. 
\end{prop}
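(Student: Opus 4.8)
The plan is to compute $\min(R)$ by sorting vectors according to their $\mathcal O_K$-rank, which here can only be $1$ or $2$ since $T$ is two-dimensional. Writing $f_1,f_2$ for the standard basis of $T$ (so $h(f_1,f_1)=h(f_2,f_2)=2$, $h(f_1,f_2)=\eta$), every $z\in R$ equals $p_1\otimes f_1+p_2\otimes f_2$ with $p_i\in P$, and a short computation gives
$$
h(z,z)=2\,h(p_1,p_1)+2\,h(p_2,p_2)+\Tr_{K/\Q}\bigl(\eta\, h(p_1,p_2)\bigr).
$$
I record two preliminary facts. Since $P$ and $T$ both have Hermitian minimum $2$, a rank $1$ (split) vector has $h(z,z)\ge 2\cdot 2=4$. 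And the trace form of $R$ is even unimodular of dimension $48$: unimodularity follows from $\det R=\lvert\mathfrak d_K\rvert^{24}d_R^{\,2}$ with $d_R=d_P^{2}d_T^{12}=11^{-12}$ (using $d_P=11^{-6}$, which comes from $\det P=1$, and $d_T=1$), while evenness follows because all values $h(z,z)$ are integers. Hence the theta series bound gives $\min(R)\le 3$, and $\min(R)$ is an integer lying in $\{2,3\}$ once I prove $\min(R)\ge 2$.

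Next I would prove $\min(R)\ge 2$ on rank $2$ vectors. Put $a=h(p_1,p_1)\ge 2$, $b=h(p_2,p_2)\ge 2$, $c=h(p_1,p_2)$; positive definiteness forces $N_{K/\Q}(c)<ab$, while $\lvert\Tr_{K/\Q}(\eta c)\rvert\le 2\lvert\eta\rvert\,\lvert c\rvert=2\sqrt{3\,N_{K/\Q}(c)}$ because $N_{K/\Q}(\eta)=3$. Therefore
$$
h(z,z)>2a+2b-2\sqrt{3ab}\ \ge\ 8-4\sqrt3\ >\ 1,
$$
the minimum of $2a+2b-2\sqrt{3ab}$ over $a,b\ge 2$ being attained at $a=b=2$. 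As $h(z,z)\in\Z$ this gives $h(z,z)\ge 2$, so with the split-vector bound $\min(R)\in\{2,3\}$.

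It remains to decide when $\min(R)=2$. A rank $2$ vector $z$ with $h(z,z)=2$ satisfies $\Tr_{K/\Q}(\eta c)=2-2a-2b$, so $2a+2b-2\le 2\sqrt{3N_{K/\Q}(c)}$, giving $N_{K/\Q}(c)\ge (a+b-1)^2/3$ and hence $d_N:=ab-N_{K/\Q}(c)\le ab-\tfrac{(a+b-1)^2}{3}$ for $N:=\mathcal O_K p_1+\mathcal O_K p_2\subseteq P$. On the other hand $\min(N)\ge\min(P)=2$, so Proposition \ref{dim2} (with $\mu(\mathcal O_K)=9/11$) yields $d_N\ge \min(N)^2(1-\mu)\ge 8/11$. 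The inequality $ab-(a+b-1)^2/3\ge 8/11$ has only finitely many integer solutions with $a,b\ge 2$, and a direct check eliminates every one except $a=b=2$ (for instance $(a,b)=(2,3)$ forces $d_N\le 2/3<8/11$ and $(a,b)=(3,4)$ forces $d_N\le 0$). Thus $a=b=2$, $\min(N)=2$, and $d_N\in[8/11,1]$; since $11\,N_{K/\Q}(c)$ is a value of $m^2+mn+3n^2$ in $[33,36]$ and neither $34$ nor $35$ is represented by this form, $N_{K/\Q}(c)\in\{3,36/11\}$. By the classification of binary Hermitian lattices in Section \ref{s3} these are exactly $N\simeq T$ (discriminant $1$) and $N\simeq \sqrt2\,L_K$, the minimum $2$ rescaling of the densest lattice $L_K$ (the only copy that can sit in $P$, since $L_K$ itself has minimum $1$). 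Conversely, for each of these I would exhibit a basis realizing $h(z,z)=2$, namely $c=(-5-\eta)/\sqrt{-11}$ (for $T$) and $c=(-4-2\eta)/\sqrt{-11}$ (for $\sqrt2\,L_K$), both having $\Tr_{K/\Q}(\eta c)=-6$ so that $h(z,z)=8-6=2$; for $N\simeq T$ this is also the equality case of Proposition \ref{bound} since $T\simeq T^{\#}$. This proves $\min(R)=2$ exactly when $(P,h)$ represents $T$ or $L_K$, and $\min(R)=3$ otherwise.

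The heart of the argument — and the step I expect to be most delicate — is this forward characterization: converting the single numerical equation $h(z,z)=2$ into the geometric conclusion that $N$ must be one of two explicit lattices. The decisive leverage is Proposition \ref{dim2}: the density estimate $d_N\ge 8/11$ for minimum $2$ binary lattices is precisely what collapses the a priori several admissible pairs $(a,b)$ down to $(2,2)$ and pins the off-diagonal norm to the two values $3$ and $36/11$, so that no appeal to the explicit geometry of the Leech lattice is needed. I would also have to confirm that minimum $2$ binary Hermitian $\mathcal O_K$-lattices of discriminant $1$, respectively $8/11$, are unique up to isometry, which follows from the two-dimensional classification recalled in Section \ref{s3}.
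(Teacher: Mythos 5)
Your proof is correct, and it reaches the same two structural conclusions as the paper ($h(z,z)\geq 4$ for split vectors; the norm-$2$ vectors come exactly from binary sublattices of $P$ isometric to $T$ or to the rescaled $L_K$, detected via the density bound of Proposition \ref{dim2} and the non-representability of $34$ and $35$ by the norm form). The route differs in one step: where the paper invokes Proposition \ref{bound} to get $h(z,z)\geq 2\,d_2(P)^{1/2}\geq 2\sqrt{8/11}$ for rank-$2$ vectors and then classifies the sublattices $L\leq P$ with $d_L\leq 1$ via Remark \ref{basis}, you expand $h(z,z)=2h(p_1,p_1)+2h(p_2,p_2)+\Tr_{K/\Q}(\eta\,h(p_1,p_2))$ in coordinates relative to the basis of $T$ and run a direct Cauchy--Schwarz estimate, in effect re-proving the $r=2$ case of Proposition \ref{bound} for this specific $T$. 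What your version buys is a genuinely more complete treatment of the ``if'' direction: the paper's equality criterion in Proposition \ref{bound} only produces a norm-$2$ vector when $P$ contains a section isometric to $T^{\#}=T$ (for the $L_K$ case equality in (\ref{tens}) would give $2\sqrt{8/11}<2$, so that criterion is silent), whereas your explicit witnesses $c=-\overline{\eta}$ and $c=(-4-2\eta)/\sqrt{-11}$ with $\Tr_{K/\Q}(\eta c)=-6$ exhibit norm-$2$ vectors in both cases. Two small points you should tighten: the classification recalled in Section \ref{s3} only gives uniqueness of the \emph{densest} binary lattice (determinant $8/11$), so for the determinant-$1$ case you should note that $N(c)=3$ forces $c\in\{\pm\eta,\pm\overline{\eta}\}$, whence $N\simeq T$ directly; and the claim that the Gram matrix with $c=(-4-2\eta)/\sqrt{-11}$ is realized by a basis of $\sqrt{2}\,L_K$ needs the observation that this matrix has minimum $2$ (its form takes only even values), so that uniqueness of the densest binary lattice applies.
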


\begin{proof}
The trace lattice of $R$ is an even unimodular lattice of dimension 48, so the Hermitian minimum
of $R$ is either 1, 2, or 3 and 
for any $v\in R$ we have $h(v,v)\in \Z $. So let $0\neq v  \in R$. In order to 
 apply Proposition \ref{bound} we need to deal with the two cases that the 
rank of $v$ is 1 or 2. If the rank of $v$ is 1, then $v = p\otimes t$ is a pure 
tensor and $h(v,v) \geq \min (P) \min (T) = 4$. 
If the rank of $v$ is 2, then by Proposition \ref{bound} 
$$h(v,v) \geq 2 d_2(P) ^{1/2}, \mbox{ because } d_2(T) = d_T = 1. $$ 
Since $d_2(P) \geq 2^2 (1-\mu ) = \frac{8}{11}$ the norm 
$h(v,v) \geq 2$ and $h(v,v) $ is strictly bigger than 2, if $d_2(P) > 1$. 
So let $L\leq P$ be a 2-dimensional sublattice of determinant $d_L \leq 1$. 
By Remark \ref{basis} the lattice $L$ has a basis $(x,y)$ such that 
$$(1-\mu) h(x,x) h(y,y) = \frac{2}{11} h(x,x) h(y,y) \leq d_L \leq 1 .$$
This  implies that $h(x,x) = h(y,y) = 2$ and the Gram matrix of $(x,y) $ is
$$\left(\begin{array}{cc} 2 & z \\ \overline{z} & 2 \end{array} \right) $$
for some $z\in \frac{1}{\sqrt{-11}} \mathcal O_K $. 
Since the minimum of $L$ is 2 and the densest 
2-dimensional $\mathcal O_K$-lattice of minimum 2 has determinant $\frac{8}{11}$ we obtain
$$4-z\overline{z} \in \{ \frac{8}{11} , \frac{9}{11}, \frac{10}{11} , 1 \} $$
There are no elements in $K$ with norm $\frac{35}{11}$ or $\frac{34}{11}$,
so the middle two possibilities are excluded. 
For the other two lattices we find $N(z) = z \overline{z} = 3 $ and then $L\cong T$
or $N(z) = \frac{36}{11} $ and $L\cong L_K$.
\end{proof}

\begin{cor}
$\min (P_1\otimes _{\mathcal O_K} T) = 2$ with kissing number $2\cdot 196560$,
$\min (P_2\otimes _{\mathcal O_K} T) = 2$ with kissing number $2\cdot 15120$,
 and 
$\min (P_3\otimes _{\mathcal O_K} T) = 3$.
The trace lattice of  the latter is isometric to the extremal even unimodular lattice 
$P_{48n}$ discovered in \cite{cycloquat}.
\end{cor}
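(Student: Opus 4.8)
The plan is to deduce everything from the preceding Proposition, reducing the three assertions to the arithmetic of Hermitian inner products of pairs of minimal vectors of the $P_i$. By that Proposition the Hermitian minimum of $R=P\otimes_{\mathcal O_K}T$ is $2$ or $3$, and it equals $3$ precisely when $(P,h)$ represents neither $T$ nor $L_K$, so I first translate the condition ``represents $T$ or $L_K$'' into a statement about inner products. Since the trace lattice of each $P_i$ is the Leech lattice, it is self-dual, whence $P_i^{\#}=\sqrt{-11}\,P_i$ and $h(P_i,P_i)\subseteq \mathcal D_{K/\Q}^{-1}=\tfrac1{\sqrt{-11}}\mathcal O_K$; moreover every minimal vector has $h(x,x)=2$. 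Writing $h(x,y)=c/\sqrt{-11}$ with $c=(p+q\sqrt{-11})/2\in\mathcal O_K$, one gets $x\cdot y=\Tr_{K/\Q}h(x,y)=q\in\{0,\pm1,\pm2\}$ for $x\neq\pm y$, and $N(h(x,y))=(p^2+11q^2)/44\le 4$ by Cauchy--Schwarz. A short inspection of this quadratic constraint (together with Remark \ref{basis} and Proposition \ref{dim2}, which force a minimal $2$-section of small discriminant to be isometric to $T$ or to $L_K$) shows that a section isometric to $T$ occurs exactly when some minimal pair has $N(h(x,y))=3$ (forcing $q=\pm1,\ p=\pm11$), and one isometric to $L_K$ exactly when $N(h(x,y))=36/11$ (forcing $(q,p)=(0,\pm12)$ or $(\pm2,\pm10)$). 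Thus the entire statement reduces to which norms $N(h(x,y))$ are realised by minimal pairs in each structure.

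Next I would carry out this determination separately for $P_1,P_2,P_3$. The Euclidean values $x\cdot y=q$ are fixed by the Leech lattice, but $p$—equivalently $x\cdot(\sqrt{-11}\,y)$—depends on the chosen complex multiplication, so the three structures genuinely differ. Using the explicit generator matrices underlying the constructions via \cite{quat}, and cutting down the number of cases by the automorphism groups $2.G_2(4)$, $(L_2(7)\times\tilde S_3).2$ and $\SL_2(13).2$ acting on the set of minimal vectors, I would enumerate orbits of minimal pairs and evaluate $N(h(x,y))$ on orbit representatives. For $P_1$ and $P_2$ this produces a pair with $N(h(x,y))\in\{3,36/11\}$, so by the Proposition $\min(P_i\otimes_{\mathcal O_K}T)=2$; for $P_3$ it shows that neither value $3$ nor $36/11$ is attained, whence $\min(P_3\otimes_{\mathcal O_K}T)=3$.

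For the kissing numbers I would count the minimal (Hermitian norm $2$) vectors of $P_i\otimes_{\mathcal O_K}T$. These are exactly the rank-$2$ vectors realising the equality case of Proposition \ref{bound}, so each records a $2$-dimensional section of $P_i$ isometric to $T$ (resp.\ $L_K$) together with the distinguished diagonal tensor it produces; counting these sections in the single $\Aut(P_i)$-orbit realising the relevant norm and accounting for the unit $\pm1$ yields $2\cdot196560$ for $P_1$ and $2\cdot15120$ for $P_2$. Finally, the trace lattice of $P_3\otimes_{\mathcal O_K}T$ is an even unimodular $48$-dimensional lattice of Hermitian minimum $3$, i.e.\ Euclidean minimum $6$, hence extremal. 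To pin it down as $P_{48n}$ I would compare automorphism groups: the construction forces $\SL_2(13)$ (from $\Aut_{\mathcal O_K}(P_3)=\SL_2(13).2$) into its automorphism group, which distinguishes $P_{48n}$ of \cite{cycloquat} among the known extremal lattices in dimension $48$; alternatively one exhibits an explicit isometry with the cyclo-quaternionic model given there.

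The main obstacle is the negative statement for $P_3$: ruling out \emph{every} minimal pair with $N(h(x,y))\in\{3,36/11\}$ is what makes the argument work, and doing so cleanly requires either a complete orbit enumeration under $\SL_2(13).2$ or, preferably, an arithmetic obstruction at the primes dividing the discriminant of the endomorphism algebra $\mathcal Q_{\infty,13}$. This is precisely the point at which the structural input—the quaternionic description of the Hermitian Leech structures—must replace what would otherwise be a large search, and it is also what explains why $\mathcal Q_{\infty,13}$ behaves differently from $\mathcal Q_{\infty,2}$ and $\mathcal Q_{\infty,7}$.
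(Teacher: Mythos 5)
Your proposal is correct and follows essentially the same route as the paper: both reduce the corollary, via the preceding Proposition, to deciding whether each $P_i$ represents $T$ or $L_K$, and both settle this by computing Hermitian inner products $h(v,w)$ of minimal vectors over orbit representatives of $\Aut_{\mathcal O_K}(P_i)$ (the paper additionally notes, for $P_1$, that the quaternionic endomorphism ring $\mathcal M\subset\mathcal Q_{\infty,2}$ forces every minimal vector into an $L_K$-sublattice $\langle v\rangle_{\mathcal M}$, which is close to the structural shortcut you suggest). Your explicit translation into the norms $N(h(x,y))\in\{3,36/11\}$ is a correct elaboration of what the paper leaves implicit.
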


\begin{proof}
For $P=P_1$, $P_2$, and $P_3$  we 
 computed orbit representatives of the $\Aut_{\mathcal O_K}(P)$-action on the set $S$ of minimal 
vectors of $P$. For each orbit representative $v$ we computed all inner products $h(v,w)$ with
$w\in S$ to obtain the representation number of $T$ and $L_K$ by $P$. \\
Let $P=P_1$.
Then $\mathcal M = \End _{\Aut_{\mathcal O_K}(P) }(P)  $ is the maximal order in 
the quaternion algebra ${\mathcal Q}_{\infty ,2}$.
 Given $v\in S$ there is a unique sublattice
$$\langle v \rangle _{\mathcal M} = \langle v,w \rangle _{\mathcal O_K} \cong _{\mathcal O_K} L_K.$$ 
The lattice $P_1$ does not represent the lattice $T$.
The lattice $P_2$ represents both lattices, $T$ and $L_K$, with multiplicity 
$10080$ and $5040$  respectively.
Only the lattice $P_3$ represents neither $T$ nor $L_K$.
\end{proof}

\section{Hermitian $\Z[\frac{1+\sqrt{-7}}{2} ]$-lattices.} 

We now restrict to the special case $K=\Q[\sqrt{-7}]$. 
Then $\mathcal O_K = \Z[\alpha ]$ 
where $\alpha ^2-\alpha +2 = 0$. Put $\beta := \overline{\alpha } = 1-\alpha $ 
its 
complex conjugate.
Then $\Z [\alpha ] $ is an Euclidean domain with Euclidean minimum $\frac{4}{7}$. 

Let $(P,h)$ be a Hermitian $\Z [\alpha ]$-lattice,
so $P$ is a free $\Z[\alpha ]$-module  and 
$h:P\times P \to \Q [\alpha ] $ a positive definite Hermitian form. 
One example of such a lattice is the
 {\em Barnes lattice}  $P_b$ with Hermitian Gram matrix 
$$\left( \begin{array}{ccc} 2 & \alpha & -1 \\  \beta & 2 & \alpha \\ -1 & \beta & 2 
\end{array} \right) $$
Then $P_b$ is Hermitian unimodular, 
$P_b = P_b^{\#}$ 
 and has Hermitian minimum 
$\min (P_b)= 2 $.

We will make use of the following two facts: 

{\bf Fact 1:} 
\begin{itemize}
\item[(a)] $d_1(P_b) = 2$.
\item[(b)] $d_2(P_b) = 2$. 
\item[(c)] $d_3(P_b) = d_{P_b} = 1$.
\end{itemize}

{\bf Fact 2:} 
\begin{itemize} 
\item[(a)]
By Proposition \ref{dim2} 
the unique densest $2$-dimensional $\Z[\alpha ]$-lattice is 
the lattice $P_a$ with
Gram matrix  $\left( \begin{array}{cc} 2 & 4/\sqrt{-7} \\ -4/\sqrt{-7} & 2 \end{array} \right) $,
$\min (P_a) = 2$, and $d_{P_a} = 12/7$.
\item[(b)]
There is a version of Voronoi theory also for Hermitian lattices developed in \cite{CoulVor}. 
This is used in the thesis \cite{Meyer} to classify the densest $\Z[\alpha ]$-lattices
in dimension 3. From this it follows that 
 $P_b$ is the globally
densest $3$-dimensional Hermitian $\Z [\alpha ]$-lattice.
\end{itemize} 

\begin{remark} 
The densest 8-dimensional 
$\Z$-lattice $E_8$ has a structure as a Hermitian $\Z[\alpha ]$-lattice $P_c$ of
dimension 4, which therefore realises the unique densest 4-dimensional $\Z[\alpha ]$-lattice. 
\end{remark}

From the two facts above we immediately obtain the following Proposition. 

\begin{prop} \label{densest} 
 Let $(P,h)$ be a Hermitian $\Z[\alpha ]$
lattice of dimension $\geq 3$ and with $\min (P) =: m$.  Then  
\begin{itemize}
 \item[(a)] $d_1(P) = \min (P) = m $.
\item[(b)] $d_2(P) \geq \frac{3m^2}{7} $.
\item[(c)] $d_3(P) \geq \frac{m^3}{8}$ and $d_3(P) = \frac{m^3}{8}$ if and only if $P$ contains a sublattice
isometric to $\sqrt{m/2} P_b$.
\end{itemize}
\end{prop}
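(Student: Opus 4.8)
The plan is to prove the three statements (a), (b), (c) by exhibiting, for each rank $r\in\{1,2,3\}$, the best possible lower bound on $d_r(P)$ in terms of $m=\min(P)$, using the dimension-$2$ and dimension-$3$ density bounds recorded in Fact~2, together with the scaling behaviour of discriminants. The key observation throughout is that if $P$ has minimum $m$, then any sublattice $L\le P$ also has $\min(L)\ge m$, so the \emph{densest} $r$-dimensional lattice of a given minimum controls $d_r(P)$ from below. Statement (a) is immediate from the definition, since $d_1(P)=\min(P)=m$ by the remark following Lemma~\ref{ddd}.

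For (b), I would take any rank-$2$ sublattice $L\le P$ and note that $\min(L)\ge m$. By Proposition~\ref{dim2} applied with $\mu=\mu(\mathcal O_K)=\tfrac47$, we get
\begin{equation*}
d_L \ge m^2(1-\mu) = m^2\bigl(1-\tfrac47\bigr) = \tfrac{3m^2}{7}.
\end{equation*}
Taking the minimum over all such $L$ yields $d_2(P)\ge \tfrac{3m^2}{7}$. Here one uses that the densest $2$-dimensional lattice $P_a$ from Fact~2(a) has $\min(P_a)=2$ and $d_{P_a}=12/7=\tfrac{3\cdot 2^2}{7}$, confirming the bound is sharp and correctly scaled.

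For (c), the scaling argument is the heart of the matter. I would first recall that rescaling a Hermitian lattice by a factor $\sqrt{c}$ (i.e.\ multiplying $h$ by $c$) multiplies the minimum by $c$ and the rank-$r$ discriminant by $c^r$. Given a rank-$3$ sublattice $L\le P$ with $\min(L)\ge m$, I would compare $L$ with a suitable rescaling of the globally densest $3$-dimensional lattice $P_b$ from Fact~2(b), which has $\min(P_b)=2$ and $d_{P_b}=1$. The lattice $\sqrt{m/2}\,P_b$ has minimum $\tfrac{m}{2}\cdot 2=m$ and discriminant $\bigl(\tfrac{m}{2}\bigr)^{3}\cdot 1=\tfrac{m^3}{8}$. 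Since $P_b$ is globally densest in dimension $3$, any $3$-dimensional lattice of minimum $\ge m$ has discriminant at least $\tfrac{m^3}{8}$, giving $d_3(P)\ge \tfrac{m^3}{8}$.

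The main obstacle—and the step requiring the most care—is the equality case in (c): showing that $d_3(P)=\tfrac{m^3}{8}$ forces $P$ to contain a sublattice isometric to $\sqrt{m/2}\,P_b$. The forward direction is clear, since $\sqrt{m/2}\,P_b$ achieves the bound. For the converse, I would argue that equality $d_3(P)=\tfrac{m^3}{8}$ means the minimizing rank-$3$ sublattice $L$ has $\min(L)=m$ and $d_L=\tfrac{m^3}{8}$, so after rescaling by $\sqrt{2/m}$ it is a $3$-dimensional $\mathcal O_K$-lattice of minimum exactly $2$ and discriminant exactly $1$, hence achieves the global density record of Fact~2(b). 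The uniqueness assertion in Fact~2(b)—that $P_b$ is \emph{the} densest such lattice—then identifies this rescaled $L$ with $P_b$ up to isometry, which is precisely what I would need to invoke to close the argument; the delicate point is that "globally densest" must be read as giving uniqueness of the optimal lattice, not merely the optimal value.
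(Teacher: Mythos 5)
Your proof is correct and follows essentially the same route as the paper, which derives the proposition directly from Fact~1 and Fact~2 (part (b) via Proposition~\ref{dim2} with $\mu=\frac{4}{7}$, part (c) via the scaling of discriminants and the global optimality of $P_b$ from \cite{Meyer}). Your observation that the equality case in (c) needs \emph{uniqueness} of the densest $3$-dimensional lattice, not just the optimal density value, is exactly the point the paper implicitly delegates to the classification in \cite{Meyer}.
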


\subsection{An application to unimodular 72-dimensional lattices.}\label{Herm}

We now apply the theory from the previous sections to obtain a new
proof for the extremality of the even unimodular lattice $\Gamma $ in 
dimension 72 from \cite{dim72}.
 Michael Hentschel 
 \cite{Hentschel} 
 classified all Hermitian $\Z[\alpha ]$-structures 
on the 
even unimodular $\Z $-lattices of dimension 24 using the 
Kneser neighbouring method \cite{Kneser} to generate the lattices 
and checking completeness with the mass formula.
In particular there are exactly nine such $\Z[\alpha ]$ structures 
$(P_i,h)$  ($1\leq i \leq 9$)
such that the trace lattice
$(P_i,  \Tr _{\Z[\alpha ]/\Z } \circ h) \cong \Lambda $ is
the Leech lattice.
They are used by the second author in 
\cite{dim72} to construct  nine 36-dimensional 
Hermitian $\Z[\alpha ]$-lattice $R_i $  defined by 
$(R_i,h) := P_b \otimes _{\Z[\alpha ]} P_i $.
Using the methods described above we obtain the following main 
result on the minimum of these tensor products. 

\begin{theorem} \label{main} 
The minimum of the Hermitian lattices $R_i$ is either $3$ or $4$.
The number of vectors of norm $3$ in $R_i$ is equal to the representation 
number of $P_i$ for the sublattice $P_b$. 
In particular $\min (R_i) = 4$ if and only if the Hermitian Leech lattice $P_i$ 
does not contain a sublattice isomorphic to $P_b$.
\end{theorem}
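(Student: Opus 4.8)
The plan is to bound $\min(R_i)$ from below by a rank-by-rank application of Proposition \ref{bound}, to bound it from above by modularity, and then to read off the norm-$3$ vectors from the equality case of that proposition.

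First I would record the ambient facts. The trace lattice $(R_i,\Tr_{K/\Q}\circ h)$ is an even unimodular $\Z$-lattice of dimension $72$, so its $\Z$-minimum is at most $2+2\lfloor 72/24\rfloor = 8$; since $v\cdot v = \Tr_{K/\Q} h(v,v) = 2h(v,v)$ and the trace lattice is even, we get $h(v,v)\in\Z_{>0}$ for $0\ne v\in R_i$ and $\min(R_i)\le 4$. Also $\min(P_i)=2$, because the Leech lattice has $\Z$-minimum $4$ and the trace form doubles the Hermitian minimum. Thus $\min(R_i)\in\{1,2,3,4\}$, and it remains to exclude $1$ and $2$.

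For the lower bound I would split according to the rank $r\in\{1,2,3\}$ of a vector $z\in R_i=P_b\otimes_{\mathcal O_K}P_i$ (here $r\le\min(3,12)=3$) and feed \textbf{Fact 1} (the values $d_1(P_b)=d_2(P_b)=2$, $d_3(P_b)=1$) together with Proposition \ref{densest} (with $m=\min(P_i)=2$, giving $d_1(P_i)=2$, $d_2(P_i)\ge 12/7$, $d_3(P_i)\ge 1$) into Proposition \ref{bound}:
\[
h(z,z)\ge d_1(P_b)d_1(P_i)=4,\qquad h(z,z)\ge 2\sqrt{d_2(P_b)d_2(P_i)}\ge 2\sqrt{\frac{24}{7}}>3,
\]
\[
h(z,z)\ge 3\bigl(d_3(P_b)d_3(P_i)\bigr)^{1/3}\ge 3
\]
for $r=1,2,3$ respectively. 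Since $h(z,z)$ is an integer, the rank-$1$ and rank-$2$ estimates force $h(z,z)\ge 4$, so no vector has norm $1$ or $2$; hence $\min(R_i)\ge 3$ and therefore $\min(R_i)\in\{3,4\}$.

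Next I would analyse the norm-$3$ vectors, which by the previous step must all have rank $3$ and must attain equality in the rank-$3$ bound. Equality forces $3\ge 3\,d_3(P_i)^{1/3}$, i.e. $d_3(P_i)=1$ (using $d_3(P_b)=1$), which by Proposition \ref{densest}(c) (here $\sqrt{m/2}\,P_b=P_b$) is equivalent to $P_i$ containing a sublattice $N$ isometric to $P_b$. Conversely such an $N$ is a unimodular minimal $3$-section, and since $P_b=P_b^{\#}$ the matching condition in the equality case of Proposition \ref{bound} ($M_3\cong L_3^{\#}$ with $L_3=P_b$) is automatically satisfied, so norm-$3$ vectors exist exactly when $P_i\supseteq P_b$. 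In particular $\min(R_i)=4$ if and only if $P_i$ contains no copy of $P_b$, which is the last assertion.

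Finally, for the enumeration I would use the explicit shape of the equality case: every norm-$3$ vector is $z=\sum_{j=1}^3 e_j\otimes f_j$ with $(f_j)$ a basis of $P_b$ and $(e_j)$ a basis of a minimal $3$-section $N\cong P_b$ of $P_i$ whose Gram matrix is $\overline{\left(h(f_j,f_k)\right)}^{-1}$. This sets up the map $z\mapsto N$ from norm-$3$ vectors to copies of $P_b$ inside $P_i$; a short computation with the base-change matrices shows that the fibre over a fixed $N$ is a torsor under $\Aut_{\mathcal O_K}(P_b)$, so that $z$ exactly records an isometric embedding of $P_b$ into $P_i$ and the total count equals the representation number of $P_b$ by $P_i$. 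I expect this last bookkeeping to be the main obstacle: one must extract the correspondence cleanly from Proposition \ref{bound} and verify that the multiplicities (the action of $\Aut_{\mathcal O_K}(P_b)$, the units $\mathcal O_K^{\times}=\{\pm1\}\subset\Aut_{\mathcal O_K}(P_b)$, and the distinction between linear and conjugate-linear isometries) combine to give \emph{exactly} the representation number rather than a multiple of it.
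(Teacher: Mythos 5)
Your proposal is correct and follows essentially the same route as the paper: a rank-by-rank application of Proposition \ref{bound} combined with Proposition \ref{densest} (the bounds $4$, $2\sqrt{24/7}>3$, and $3\,d_3(P_i)^{1/3}\ge 3$ for ranks $1$, $2$, $3$), with the equality case of Proposition \ref{bound} identifying norm-$3$ vectors with copies of $P_b=P_b^{\#}$ inside $P_i$. In fact you are slightly more careful than the paper, which leaves both the upper bound $\min(R_i)\le 4$ and the exact matching of the norm-$3$ count with the representation number implicit.
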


\begin{proof}
The proof follows from Proposition \ref{bound} and uses Proposition \ref{densest}:
(An alternative proof that is not based on the computation of perfect $\Z[\alpha ]$-lattices
is given in the next section.) \\
Let $z\in P_i \otimes _{\Z[\alpha ]} P_b$ be a non-zero vector of rank $r=1,2,$ or $3$.
\begin{itemize}
\item
If $r=1$, then $z=v\otimes w $ and $h(z,z) \geq \min(P_i) \min(P_b) = 4$. 
\item
If $r=2$, then $h(z,z) \geq 2 \sqrt{d_2(P_b)} \sqrt{d_2(P_i)} 
\geq 2 \sqrt{2} \sqrt{\frac{12}{7}} > 3$, so $h(z,z) \geq 4$.
\item
If $r=3$, then  $h(z,z) \geq 3 d_3(P_i)^{1/3} \geq 3$. 
Since $h(z,z) \in \Z $ this implies that $h(z,z) \geq 3$ and equality requires that $P_i$ 
contains a minimal section isometric to $P_b^{\#}=P_b$. 
\end{itemize}
\end{proof}

\begin{cor} 
Let $P_1$ denote the Hermitian Leech lattice with automorphism group $\SL_2(25)$ 
(see \cite{dim72}). 
Then $\min (P_1 \otimes _{\Z[\alpha ]} P_b) = 4$.  
For the  other eight Hermitian Leech lattices  $P_i$ the minimum is
$\min (P_i \otimes _{\Z[\alpha ]} P_b) = 3 $ ($i=2,\ldots ,9$). 
\end{cor}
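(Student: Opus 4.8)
The plan is to reduce everything to Theorem~\ref{main}, which expresses $\min(R_i)$ purely in terms of whether $P_i$ represents the Barnes lattice $P_b$: one has $\min(R_i)=3$ exactly when $P_i$ contains a sublattice isometric to $P_b$, and $\min(R_i)=4$ otherwise, with the number of norm-$3$ vectors equal to the representation number of $P_b$ by $P_i$. So the whole corollary comes down to computing, for each of the nine Hermitian Leech lattices, this single representation number, and showing it vanishes precisely for $P_1$.

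First I would compute the set $S$ of Hermitian minimal vectors of each $P_i$. Because the only units of $\mathcal O_K=\Z[\alpha]$ are $\pm 1$, and because $\Tr_{K/\Q} h(v,v)=2\,h(v,v)=4$ on minimal vectors, the $196560$ minimal vectors of the Leech lattice collapse to $98280$ Hermitian minimal vectors up to sign. To keep the search feasible I would use the large automorphism group $\Aut_{\mathcal O_K}(P_i)$ to cut $S$ down to a handful of orbit representatives, exactly as in the analogous computation for the $\Z[\frac{1+\sqrt{-11}}{2}]$-lattices in the previous section.

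The lattice $P_b$ is spanned by three minimal vectors $x,y,z$ whose Gram matrix has diagonal $(2,2,2)$ and off-diagonal entries $h(x,y)=\alpha$, $h(x,z)=-1$, $h(y,z)=\alpha$; note $N(\alpha)=2$ and $N(-1)=1$. A copy of $P_b$ in $P_i$ is thus a triple of minimal vectors realising this pattern up to isometry. For each orbit representative $x$ I would tabulate the inner products $h(x,w)$ for all $w\in S$, isolating the vectors with $N(h(x,w))=2$ (candidates for the $\alpha$-entries) and those with $h(x,w)=\pm 1$ (candidates for the $-1$-entry). Running over compatible pairs $(y,z)$ from these two lists and checking the remaining condition $h(y,z)=\alpha$ then counts the Barnes configurations through $x$, and summing over orbits yields the representation number.

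The step I expect to be hardest is the non-representation claim for $P_1$, the lattice with $\Aut_{\mathcal O_K}(P_1)\cong\SL_2(25)$. For $P_2,\dots,P_9$ it suffices to exhibit a single triple realising the Barnes Gram matrix, so a positive search terminates as soon as one configuration is found. Ruling out $P_b$ in $P_1$, by contrast, is a negative statement: one must verify for every orbit representative $x$ that no pair of minimal vectors at inner products of norm $2$ and $\pm 1$ from $x$ closes up into a Barnes triple. This cannot be shortcut by an example and relies on the completeness of the orbit decomposition of $S$ under $\SL_2(25)$; establishing that completeness---and hence the genuine absence of any $P_b$---is the crux of the argument.
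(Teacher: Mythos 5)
Your proposal is correct and follows essentially the same route as the paper: both reduce the corollary to Theorem \ref{main} and then carry out a machine computation that counts, for each $P_i$, the triples of minimal vectors realising the Gram matrix of $P_b$, working orbit by orbit under $\Aut_{\Z[\alpha]}(P_i)$. The paper's version of the search fixes the inner products to be exactly $\alpha$ and $-1$ rather than filtering by norm, but this is only a minor organisational difference.
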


\begin{proof}
With MAGMA (\cite{MAGMA}) we computed the  number of sublattices isomorphic to $P_b$ in 
the lattices $P_i$. Only one of them, $P_1$,
 does not contain such a sublattice, so 
$d_3(P_1) > 1$ and hence $\min (P_1 \otimes _{\Z[\alpha ]} P_b) \geq 4$.  
For the computation we went through orbit representatives $v_1$ of the 
Hermitian automorphism group $\Aut(P_i)$ on the set $S$ of minimal vectors of the
Leech lattice. For any $v_1$ we compute the set 
$$A(v_1) := \{ v\in S \mid h(v,v_1) = \alpha \} .$$
In all cases this set $A(v_1)$ has 32 elements. 
For all $v_2 \in A(v_1)$ we count the number of vectors $v\in S$ such that 
$h(v,v_2) = \alpha $ and $h(v,v_1) = -1 $. 
This computation takes about 30 seconds per orbit representative $v_1$. 
\end{proof}

\subsection{An alternative proof of  Theorem \ref{main}} 

The thesis \cite{Meyer} uses the Voronoi algorithm to compute the 3-dimensional
perfect $\Z[\alpha ]$-lattices. The proof of Theorem \ref{main} only uses the 
following proposition which can be proved without computer. 

\begin{prop} Let $P$ be one of the nine $\Z[\alpha ]$-structures of the 
Leech lattices $\Lambda _{24}$.   
Then  
\begin{itemize}
 \item[(a)] $d_1(P) = \min (P) = 2 $. 
\item[(b)] $d_2(P) = \frac{12}{7} $. 
\item[(c)] $d_3(P) \geq 1$.
\end{itemize}
\end{prop}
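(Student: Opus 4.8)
The plan is to establish the three minimal-discriminant bounds for an arbitrary Hermitian $\Z[\alpha]$-structure $P$ on the Leech lattice directly from the geometry of $\Lambda_{24}$, without invoking the Voronoi classification of perfect lattices. The key observation is that the trace form identifies $h(v,v)$ with half the Euclidean norm of $v$ in $\Lambda_{24}$, so the Hermitian minimum is controlled by the minimum $4$ of the Leech lattice.

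For part (a): since the Leech lattice has minimum $4$ and the trace inner product satisfies $x\cdot x = \Tr_{K/\Q} h(x,x) = 2\,h(x,x)$, every nonzero $v\in P$ has $h(v,v)\geq 2$, and vectors realizing $h(v,v)=2$ correspond exactly to the minimal vectors of $\Lambda_{24}$, which exist. Hence $d_1(P)=\min(P)=2$. The proof mirrors the argument already used in the main Proposition, where the bound $h(v,v)\ge\min(P)\min(P_b)$ is invoked.

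For part (b): I would take a rank-$2$ free sublattice $L\le P$ realizing $d_2(P)$ and apply the two-dimensional theory of Section \ref{eukl}. Since $\mu=\mu(\mathcal O_K)=\frac47<1$ for $K=\Q[\sqrt{-7}]$, Proposition \ref{dim2} gives $d_L\geq m^2(1-\mu)=4\cdot\frac37=\frac{12}{7}$, where $m=\min(L)=2$. This shows $d_2(P)\geq\frac{12}{7}$. For the reverse inequality (equality), I would exhibit a concrete sublattice: by Fact 2(a), $P_a$ with determinant $\frac{12}{7}$ is the densest $2$-dimensional $\Z[\alpha]$-lattice, so it suffices to check that $P$ actually contains a copy of $\sqrt{m/2}\,P_a=P_a$, equivalently that among the minimal vectors of $\Lambda_{24}$ there is a pair $(x,y)$ with $h(x,x)=h(y,y)=2$ and $h(x,y)=4/\sqrt{-7}$. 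This is a statement about inner products of minimal vectors and, since the nine structures all have large automorphism groups acting on the $196560$ minimal vectors, one expects such configurations to be forced; the cleanest route is to rule out the only competing value $d_L=\frac{8}{7}$ (which would force a nonexistent element of norm $\frac{20}{7}$ in $K$, as in the norm-argument of Section 4), leaving $\frac{12}{7}$ as the minimal attainable discriminant.

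For part (c): the bound $d_3(P)\geq 1$ is the easiest, since $d_3(P)$ is the discriminant of a rank-$3$ sublattice and the full lattice $P$ is Hermitian unimodular, $d_P=1$; by Lemma \ref{ddd} applied with $r=3$ to a rank-$3$ section, or more directly by the projection formula in Lemma \ref{proj}, any rank-$3$ sublattice has discriminant at least that of a primitive one, and minimality over all free rank-$3$ sublattices combined with $d_P=1$ yields $d_3(P)\geq 1$. The main obstacle is part (b): proving $d_2(P)=\frac{12}{7}$ rather than merely $\geq\frac{12}{7}$ requires knowing that the densest configuration $P_a$ is genuinely represented in \emph{every} one of the nine structures, which is where a uniform, computer-free argument is hardest — one must produce the required pair of minimal vectors at angle $4/\sqrt{-7}$ purely from the structure of the Leech lattice and the arithmetic of $\Z[\alpha]$.
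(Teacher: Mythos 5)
Your parts (a) and the lower bound in (b) match the paper's argument. But there is a genuine gap, and it sits exactly where you claim the statement is ``easiest'': part (c). Your argument for (c) rests on the premise that $P$ is Hermitian unimodular with $d_P=1$, and this is false. Since the \emph{trace} lattice of $P$ is unimodular, one has $P^{*}=P$ and hence $P^{\#}=\mathcal D_{K/\Q}P=\sqrt{-7}\,P$, which gives $d_P=7^{-6}$, not $1$. Moreover, even for a lattice that genuinely satisfies $d_L=1$, neither Lemma \ref{proj} nor Lemma \ref{ddd} yields $d_r(L)\geq 1$ for intermediate $r$: Lemma \ref{ddd} only gives $d_3(P)=d_P\,d_{9}(P^{\#})$, which is no bound at all without independent information on $d_9(P^{\#})$. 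The real danger, which your argument never engages, is that $h$ takes values in $\frac{1}{\sqrt{-7}}\Z[\alpha]$, so discriminants of $3$-sections lie in $\frac{1}{7}\Z$ and could a priori equal $\frac{6}{7}<1$. Ruling this out is the entire content of (c) and the crux of the extremality proof of $\Gamma$: the paper argues by contradiction, first pinning a hypothetical section $M$ with $d_M<1$ down to $d_M=\frac{6}{7}$ via integrality together with the Hermitian Hermite bound $\gamma_h(M)\leq\frac{\sqrt7}{3^{1/6}}$, then showing $M$ must contain a minimal $2$-section spanned by two minimal vectors with the Gram matrix of $P_a$ and a third basis vector of norm $2$, and finally enumerating the finitely many admissible Gram matrices to find that the only one with $d_M=\frac{6}{7}$ forces a vector of norm $1$, contradicting $\min(P)=2$. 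None of this is present or replaceable by your unimodularity argument.

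A secondary, smaller issue is the equality in (b). Your proposed ``cleanest route'' of ruling out a competing value $\frac{8}{7}$ is a red herring: Proposition \ref{dim2} already gives $d_2(P)\geq\frac{12}{7}$, and excluding values below the lower bound cannot produce the sublattice of determinant exactly $\frac{12}{7}$ that equality requires. One must actually exhibit, in each of the nine structures, a pair of minimal vectors with $h(e_1,e_2)=\frac{4}{\sqrt{-7}}$ (equivalently $N(a)=16$ where $h(e_1,e_2)=\frac{a}{\sqrt{-7}}$); the paper asserts this containment (it is verified computationally), and you correctly identify the difficulty but do not close it. Note, however, that only the inequality $d_2(P)\geq\frac{12}{7}$ is used in the proof of Theorem \ref{main}, so this gap is less consequential than the one in (c).
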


\begin{proof}
(a) follows from the fact that the Leech lattice is extremal.  \\
(b) By Proposition \ref{dim2}  the discriminant 
$d_M$ 
of a $\Z[\alpha ]$-lattice $M$ of rank 2 satisfies 
$$d_M \geq \frac{3}{7} \min (M) ^2 .$$ 
If $M$ is a sublattice of $P$, then $\min (M) \geq 2$ and hence $d_M
\geq \frac{12}{7} $. On the other hand all nine Hermitian structures contain sublattices $P_a$
of determinant $\frac{12}{7}$.
\\
(c) Assume by way of contradiction that $d_3(P) < 1$.
 Since $P^{\#}=\sqrt{-7}P$, we have $h(x,y) \in \dfrac{1}{\sqrt{-7}}\Z [\alpha ] $ 
for any $x,y$ in $P$, and moreover, since $P$ is even as a Euclidean lattice,
 we see that $h(x,x) \in \Z$ for $x \in P$.
 As a consequence, if $M=\oplus_{i=1}^3 \Z [\alpha ]  e_i$ is a $3$-dimensional section of $P$,
  its discriminant $d_M \colon = \det(h(e_i,e_j))$ belongs to $\dfrac{1}{7}\Z$.
 In particular
$$d_M < 1 \Longrightarrow d_M \leq \dfrac{6}{7}.$$ 
Furthermore, $\gamma_h(M)\colon =\dfrac{\min M}{d_M^{1/3}}$ is bounded from above (see \cite{coul}) by 
\begin{equation}\label{boundeq} 
\gamma_h(M) \leq \dfrac{\sqrt{7}}{2}\gamma_6=\dfrac{\sqrt{7}}{3^{1/6}}\simeq 2.203
\end{equation}
which immediately implies that $d_M \geq \dfrac{8\sqrt{3}}{7\sqrt{7}}>5/7$. We conclude that
$$d_M < 1 \Longrightarrow d_M = \dfrac{6}{7}.$$

Next we show that if such a sublattice $M$ with $d_M = \dfrac{6}{7}$ exists, 
then it admits a minimal $2$-dimensional subsection generated over $\Z [\alpha ] $ by two minimal vectors of $P$.
 Otherwise we would have, by Remark \ref{basis}, 
$$d_2(M) \geq \frac{3}{7} 3 \cdot 2 =\dfrac{18}{7} $$
whence, using the identity 
$d_M=d_2(M)d_1(M^{\#})^{-1}$ (see Lemma \ref{ddd}),
$$\gamma_h(M^{\#}) \geq \dfrac{d_2(M)}{d_M^{2/3}}\geq 
3\left( \dfrac{6}{7}\right)^{1/3}\simeq 2.85$$ 
violating bound (\ref{boundeq}). 

Thus, one can find a $\Z [\alpha] $-basis 
$\left\lbrace e_1,e_2,e_3\right\rbrace $ of $M$, such that $h(e_1,e_1)=h(e_2,e_2)=2$ 
and  $M_2 \colon \!\!\!= \Z [\alpha ] e_1 \oplus \Z [\alpha ] e_2$
 is a minimal $2$-dimensional section of $M$. Setting $h(e_1,e_2)=\dfrac{a}{\sqrt{-7}}$,
 with $a \in \Z [\alpha ]$ we see that
$$\dfrac{12}{7} = d_2(P)\leq \det \begin{pmatrix} 2 & \dfrac{a}{\sqrt{-7}}\\ -\dfrac{\overline{a}}{\sqrt{-7}}&2\end{pmatrix} = d_2(M) \leq \gamma_h(M^{\#})d_M^{2/3} \leq \dfrac{\sqrt{7}}{3^{1/6}}\left( \dfrac{6}{7}\right)^{2/3}\simeq 1.988$$
which yields $14 < a \overline{a} \leq 16$, whence $a \overline{a} = 16$ ($15$ is not a norm),
 and $d_2(M)=d_2(P)=\dfrac{12}{7}$.
 Replacing $e_2$ by $\pm e_2\pm e_1$ if necessary, we may finally assume that $h(e_1,e_2)=\dfrac{4}{\sqrt{-7}}$.
Finally, we have the formula
$$\dfrac{6}{7}=d_M=d_{M_2} h\left( q(e_3),q(e_3)\right) =d_{M_2}\left( h(e_3,e_3) - h\left( p(e_3),p(e_3)\right) \right) $$
where $p$ and $q$ stand respectively for the orthogonal projection on the subspace
 $F \colon \!\!\!=\Q [\alpha ] M_2=\Q [\alpha ] e_1 + \Q [\alpha ] e_2$ and its orthogonal complement
 $F^{\perp}$ (see Lemma \ref{proj}). 
Furthermore, we may replace $e_3$ by $e_3+u$, with $u\in M_2$, 
and it is easily seen that $u$ may be chosen so that $h(p(e_3+u),p(e_3+u)) \leq \dfrac{80}{49}$
 (the Hermitian norm of any vector $v=xe_1+ye_2$ in $F$ is given by
 $h(v,v)=\dfrac{2}{7}\left( 7\vert x + \dfrac{2}{\sqrt{-7}}y \vert^2 + 3 \vert y \vert^2\right)$,
 and since $\Z[\alpha]$ is Euclidean with Euclidean minimum $\dfrac{4}{7}$
 we may choose $y'$ and $x'$ in $\Z[\alpha]$ such that $\vert y-y' \vert^2 \leq \dfrac{4}{7}$ and
 $\vert (x-x') + \dfrac{2}{\sqrt{-7}}(y-y') \vert^2 \leq \dfrac{4}{7}$, whence the conclusion).
 Consequently, one has 
$$\dfrac{6}{7}=d_M \geq \dfrac{12}{7}\left( h(e_3,e_3) - \dfrac{80}{49} \right)$$
which implies that $h(e_3,e_3)=2$.

Finally, the Hermitian Gram matrix of $M$ is
$$\begin{pmatrix} 2 & 4/\sqrt{-7}&a/\sqrt{-7}\\-4/\sqrt{-7}&2&b/\sqrt{-7}\\-\overline{a}/\sqrt{-7}&-\overline{b}\sqrt{-7}&2
\end{pmatrix}$$
with $a, b$ in $\Z[\alpha]$, of norm at most $16$
 (this is because the determinant of any $2$-dimensional section is at least $12/7$).
 Consequently, there are finitely many possible $a$ and $b$, and it is not hard to check that,
 up to permutation of $e_1$ and $e_2$ and sign change for $e_3$,
 the only choice to achieve the condition $d_M =6/7$ 
is $a=3/\sqrt{-7}$ and $b=0$.
 But this leads to a Hermitian Gram matrix $\begin{pmatrix} 2 & 4/\sqrt{-7}&3/\sqrt{-7}\\-4/\sqrt{-7}&2&0\\-3/\sqrt{-7}&0&2
\end{pmatrix}$ of minimum $1$, a contradiction.
\end{proof}


\begin{thebibliography}{10}
\bibitem{Ba-Ne}
C. Bachoc  G. Nebe, Extremal lattices of minimum {$8$}
  related to the {M}athieu group {$M_{22}$}, J. Reine Angew. Math.
  \textbf{494} (1998), 155--171.
\bibitem{BaVe} C. Bachoc, B. Venkov, Modular forms, lattices and spherical designs. 
in: R\'eseaux euclidiens, designs sph\'eriques et formes modulaires. 
L'Ens. Math. Monographie {\bf 37} (2001).
\bibitem{MAGMA} 
W. Bosma, J. Cannon,  C. Playoust, The Magma algebra system. I. The user language. J. Symbolic Comput., 24(3-4):235-265, 1997
\bibitem{coul} R.  Coulangeon, Tensor products of Hermitian lattices.  Acta Arith.  92  (2000) 115-130.
\bibitem{CoulVor} R. Coulangeon, Voronoi theory over algebraic number fields. 
in: R\'eseaux euclidiens, designs sph\'eriques et formes modulaires. 
L'Ens. Math. Monographie {\bf 37} (2001).
\bibitem{Hentschel} M. Hentschel,  On Hermitian theta series and modular forms.
Thesis RWTH Aachen 2009. 
http://darwin.bth.rwth-aachen.de/opus/volltexte/2009/2903/
\bibitem{hoff} Detlev W. Hoffmann, On positive definite Hermitian forms.
 Manuscripta Math.  71  (1991),  no. 4, 399--429.
\bibitem{MH} John Milnor and Dale Husemoller, \emph{Symmetric bilinear forms},
  Springer-Verlag, New York, 1973, Ergebnisse der Mathematik und ihrer
  Grenzgebiete, Band 73.
 \bibitem{kitaoka}
Yoshiyuki Kitaoka, \emph{Arithmetic of quadratic forms}, Cambridge Tracts in
  Mathematics, vol. 106, Cambridge University Press, Cambridge, 1993.
\bibitem{Kneser} M. Kneser, 
 Klassenzahlen definiter quadratischer Formen.
Archiv der Math.  8 (1957) 241-250.
\bibitem{Martinet} J. Martinet, Perfect lattices in Euclidean spaces. Springer Grundlehren 327 (2003)
\bibitem{Meyer} B. Meyer, Constante d'Hermite et th\'eorie de Voronoi, 
Thesis, Universit\'e Bordeaux 1
\bibitem{cycloquat} G. Nebe, 
Some cyclo quaternionic lattices.
J. Algebra 199, 472-498 (1998) 
\bibitem{quat} G. Nebe, 
Finite quaternionic matrix groups.
Represent. Theory 2 (1998) 106-223 
\bibitem{dim72} G. Nebe, 
An even unimodular 72-dimensional lattice of minimum 8. J. Reine und Angew. Math. (to appear) 
\bibitem{StehleWatkins} 
D. Stehl\'e, M. Watkins, On the Extremality of an 80-Dimensional Lattice. ANTS 2010: 340-356.
\end{thebibliography}
\end{document}